\numberwithin{equation}{section}
\newtheorem{theorem}{Theorem}[section]
\newtheorem{lemma}[theorem]{Lemma}
\newtheorem{definition}[theorem]{Definition}
\newtheorem{corollary}[theorem]{Corollary}
\newtheorem{example}[theorem]{Example}
\DeclareSymbolFont{AMSb}{U}{msb}{m}{n}
\DeclareMathSymbol{\N}{\mathalpha}{AMSb}{"4E}
\DeclareMathSymbol{\R}{\mathalpha}{AMSb}{"52}
\DeclareMathSymbol{\Z}{\mathalpha}{AMSb}{"5A}
\DeclareMathSymbol{\D}{\mathalpha}{AMSb}{"44}
\DeclareMathSymbol{\X}{\mathalpha}{AMSb}{"58}
\DeclareMathSymbol{\s}{\mathalpha}{AMSb}{"53}
\newcommand{\M}{\mathsf{M}}
\newcommand{\m}{\mathsf{m}}
\newcommand{\W}{\mathcal{P}}
\newcommand{\V}{\mathsf{vol}}
\begin{document}

\title{Ricci Bounds for Euclidean and Spherical Cones}

\author{Kathrin Bacher, Karl-Theodor Sturm}

\date{}

\maketitle

\begin{abstract}
We prove generalized lower Ricci bounds for Euclidean and spherical cones over compact Riemannian manifolds. These cones are regarded as complete metric measure spaces. We show that the Euclidean cone over an $n$-dimensional Riemannian manifold whose Ricci curvature is bounded from below by $n-1$ satisfies the curvature-dimension condition $\mathsf{CD}(0,n+1)$ and that the spherical cone over the same manifold fulfills the curvature-dimension condition $\mathsf{CD}(n,n+1)$.
\end{abstract}

\section{Introduction}

In two similar but independent approaches, Sturm \cite{sa,sb} and Lott \& Villani \cite{lva,lv} presented a concept of generalized lower Ricci curvature bounds for metric measure spaces $(\M,\mathsf{d},\m)$.
The full strength of this concept appears if the condition $\mathsf{Ric}(\M,\mathsf{d},\m)\ge K$ is combined with a kind of upper bound $N$ on the dimension.
This leads to the so-called curvature-dimension condition $\mathsf{CD}(K,N)$ which can be formulated in terms of optimal transportation for each pair of numbers $K\in\mathbb{R}$ and $N\in[1,\infty)$.

A complete Riemannian manifold satisfies $\mathsf{CD}(K,N)$ if and only if its Ricci curvature is bounded from below by $K$ and its dimension from above by $N$.

A broad variety of geometric and functional analytic results can be deduced from the curvature-dimension condition $\mathsf{CD}(K,N)$. Among them are the Brunn-Minkowski inequality and the theorems by Bishop-Gromov, Bonnet-Myers and Lichnerowicz. Moreover, the condition $\mathsf{CD}(K,N)$ is stable under convergence with respect to the $\mathsf{L}_2$-transportation distance $\D$.

\subsection{Statement of the Main Results}

Let $\M$ be a compact $n$-dimensional Riemannian manifold (with Riemannian distance $\mathsf{d}$ and Riemannian volume $\mathsf{vol}$) satisfying $\mathsf{diam}(\M)\leq\pi$. The \textit{Euclidean cone} $\mathsf{Con}(\M)$ over $\M$ is defined as the quotient of the product $\M\times [0,\infty)$ obtained by identifying all points in the fiber $\M\times\{0\}$. This point is called the origin $\mathsf{O}$ of the cone. It is equipped with a metric $\mathsf{d_{Con}}$ defined by the cosine formula
$$\mathsf{d_{Con}}((x,s),(y,t))=\sqrt{s^2+t^2-2st\cos(\mathsf{d}(x,y))},$$
and with a measure $\nu$ defined as the product $d\nu(x,s):=d\mathsf{vol}(x)\otimes s^Nds$.

\begin{theorem} \label{Cheeger}
The Ricci curvature of $\M$ is bounded from below by $n-1$ if and only if the metric measure space $(\mathsf{Con}(\M),\mathsf{d_{Con}},\nu)$ satisfies the curvature-dimension condition $\mathsf{CD}(0,n+1)$.
\end{theorem}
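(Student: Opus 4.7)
The plan is to verify the $\mathsf{CD}(0,n+1)$ condition on $\mathsf{Con}(\M)$ directly through a Jacobian (distortion) inequality along $\mathsf{L}_2$-Wasserstein geodesics, reducing it to the known equivalence $\mathsf{Ric}^\M\geq n-1 \Leftrightarrow \mathsf{CD}(n-1,n)$ on $\M$. First I would make precise the geodesic structure of $\mathsf{Con}(\M)$: for two points $\mathsf{p}=(x_0,s_0)$ and $\mathsf{q}=(x_1,s_1)$ with $s_0,s_1>0$ and $\alpha:=\mathsf{d}(x_0,x_1)<\pi$, the minimizing cone geodesic is obtained by isometrically developing the triangle $\{\mathsf{O},\mathsf{p},\mathsf{q}\}$ into a planar sector. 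One then writes $\gamma_u=(\gamma^\M_{\beta(u)},r(u))$, where $\gamma^\M$ is the minimizing $\M$-geodesic from $x_0$ to $x_1$, and the reparameterization $\beta(u)\in[0,1]$ together with the radial function $r(u)\geq 0$ are determined by straight lines in this planar sector via the cosine formula.

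For the main implication ($\mathsf{CD}(n-1,n)$ on $\M$ $\Rightarrow$ $\mathsf{CD}(0,n+1)$ on $\mathsf{Con}(\M)$), I would analyze the Jacobian of transport. Given an optimal transport from $\mu_0=\rho_0\,\nu$ to $\mu_1=\rho_1\,\nu$, the interpolated density satisfies $\rho_u(\gamma_u)J_u=\rho_0$, and the Jacobian $J_u$ of the map $\mathsf{p}\mapsto\gamma_u$ with respect to $\nu$ factorizes, thanks to the warped-product structure, as $J_u=J_u^{\mathrm{rad}}\cdot J_u^{\mathrm{ang}}$: a purely radial part encoding $r'(u)$ and the $s^N$-weight, and an angular part equal to the Jacobian of the $\M$-transport along $\gamma^\M$ reparameterized by $\beta(u)$. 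The $\mathsf{CD}(n-1,n)$ condition on $\M$ yields a sine-type distortion bound for $(J_u^{\mathrm{ang}})^{1/n}$; combining it with the explicit radial factor through a Hölder-type weighted concavity inequality, the goal is to obtain
$$J_u(\mathsf{p})^{1/(n+1)}\geq (1-u)+u\,J_1(\mathsf{p})^{1/(n+1)},$$
which is the Jacobian form of $\mathsf{CD}(0,n+1)$.

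The principal obstacle is the trigonometric identity underlying this reduction. The spherical distortion coefficients $\tau^{(u)}_{n-1,n}(\alpha)$ from $\M$, built out of $\sin(u\alpha)/\sin(\alpha)$, must combine with the planar radial factor produced by the cosine formula to yield exactly the trivial Euclidean coefficients of $\mathsf{CD}(0,n+1)$. This is essentially a Bishop-type comparison against the model case $\M=\mathbb{S}^n$, where $\mathsf{Con}(\M)$ is literally $\R^{n+1}$ and the inequality collapses to an equality; the general case then follows by pointwise monotonicity of the Jacobian comparison. Additional care is needed for pairs with $\mathsf{d}(x_0,x_1)=\pi$ or with an endpoint at the tip $\mathsf{O}$; these degenerate transports (along which geodesics are not unique and pass through the apex) can be treated by approximation with interior measures supported away from $\mathsf{O}$ and having strictly interior transport angles.

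For the converse implication, I would exploit the restriction and stability properties of the $\mathsf{CD}$ condition: testing $\mathsf{CD}(0,n+1)$ on $\mathsf{Con}(\M)$ against measures supported in thin spherical shells $\M\times[s,s+\varepsilon]$ about a fixed radius $s>0$, rescaling the radial variable, and passing $\varepsilon\to 0$, should restrict the condition to $\mathsf{CD}(n-1,n)$ on $(\M,\mathsf{d},\V)$ and thereby force $\mathsf{Ric}^\M\geq n-1$.
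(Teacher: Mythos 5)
Your plan heads down a genuinely different road from the paper --- a direct Jacobian comparison along cone geodesics, in the spirit of what Ketterer later carried out for general $\mathsf{CD}^*(N-1,N)$ bases --- but as written it has concrete gaps at exactly the places you flag as ``the principal obstacle.'' The claimed factorization $J_u=J_u^{\mathrm{rad}}\cdot J_u^{\mathrm{ang}}$ is not established and is not correct as stated: the cone metric scales the angular directions by $r^2$, so the angular Jacobian is not simply the $\M$-Jacobian reparameterized by $\beta(u)$; it also picks up a power of $r(u)/r(0)$, and in general the radial coordinate of the transport map depends on the angular variable, producing cross-terms that spoil a naive product structure. Untangling this requires the very trigonometric identity you defer to ``Bishop-type comparison against the model case'' and ``pointwise monotonicity.'' That identity --- relating the spherical distortion coefficients of $\mathsf{CD}(n-1,n)$ on $\M$ to the flat coefficients of $\mathsf{CD}(0,n+1)$ on the cone through the cosine-law parametrization $(\beta(u),r(u))$ --- is the actual content of the theorem and is asserted rather than proved. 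The remark that degenerate transports ``can be treated by approximation with interior measures'' is likewise glossed over; one must show that the optimal dynamical plan actually assigns zero mass to geodesics passing through the apex, not merely restrict to a neighborhood's complement.

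The paper circumvents the distortion computation entirely. It quotes the Cheeger--Taylor result that $\mathsf{C}_0:=\mathsf{Con}(\M)\setminus\{\mathsf{O}\}$ is a smooth $(n+1)$-dimensional Riemannian manifold with $\mathsf{Ric}_{\mathsf{C}_0}(v+\lambda\tfrac{\partial}{\partial r},v+\lambda\tfrac{\partial}{\partial r})=\mathsf{Ric}_\M(v,v)-(n-1)\|v\|^2_{\mathsf{T}_\phi\M}$, so $\mathsf{Ric}_\M\ge n-1$ is equivalent to $\mathsf{Ric}_{\mathsf{C}_0}\ge 0$, and the smooth equivalence $\mathsf{Ric}\ge K\Leftrightarrow\mathsf{CD}(K,N)$ does all the work away from the apex. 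The only genuinely new ingredient is Theorem~\ref{origin}: via $\mathsf{d}^2_{\mathsf{Con}}$-cyclical monotonicity of the optimal plan together with Ohta's theorem that each point of $\M$ has at most one antipode, the midpoint measure of the Wasserstein geodesic gives no mass to $\mathsf{O}$. This justifies cutting off a ball $B_\varepsilon(\mathsf{O})$, embedding the truncated cone in a complete manifold with Ricci bounded below, applying the smooth theory there, and letting $\varepsilon\to 0$. If completed, your route would buy generality (it would not require $\M$ to be a manifold), but at present it is an outline of a strategy, not a proof, and it misses the apex-avoidance lemma that the paper isolates as the crucial step.
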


The heuristic interpretation is that the Euclidean cone -- regarded as a metric measure space -- has non-negative Ricci curvature in a generalized sense. Note that already in 1982, Cheeger and Taylor \cite{cha,chb} were able to prove that the punctured Euclidean cone $\mathsf{Con}(\M)\setminus\{\mathsf{O}\}$ constructed over a compact $n$-dimensional Riemannian manifold $\M$ with $\mathsf{Ric}\geq n-1$ is an $(n+1)$-dimensional Riemannian manifold with $\mathsf{Ric}\geq 0$. Of course, $\mathsf{Con}(\M)\setminus\{\mathsf{O}\}$ is not a complete manifold and in general, $\mathsf{Con}(\M)$ on its own is not a smooth one. In particular, the Ricci curvature in the classical sense is not defined in its singularity $\mathsf{O}$.\\
Metric cones play an important role in the study of limits of Riemannian manifolds. Assume for instance that $(\M,\mathsf{d})$ is the Gromov-Hausdorff limit of a sequence of complete $n$-dimensional Riemannian manifolds whose Ricci curvature is uniformly bounded from below. Then in the non-collapsed case, every tangent cone $\mathsf{T}_x\M$ is a metric cone $\mathsf{Con}(\mathsf{S}_x\M)$ with $\mathsf{diam}(\mathsf{S}_x\M)\leq\pi$ \cite{cca,ccb}. The latter we would expect from the diameter estimate by Bonnet-Myers if $\mathsf{Ric}_{\mathsf{S}_x\M}\geq n-2$ which in turn is consistent with the formal assertion \textquoteleft$\mathsf{Ric}_{\mathsf{T}_x\M}\geq 0$\textquoteright.

\bigskip

As a second main result we deduce a generalized lower Ricci bound for the \textit{spherical cone} $\Sigma(\M)$ over the compact Riemannian manifold $\M$. It is defined as the quotient of the product space $\M\times[0,\pi]$ obtained by contracting all points in the fiber $\M\times\{0\}$ to the south pole $\mathcal{S}$ and all points in the fiber $\M\times\{\pi\}$ to the north pole $\mathcal{N}$. It is endowed with a metric $\mathsf{d}_\Sigma$ defined via
$$\cos\left(\mathsf{d}_\Sigma(p,q)\right)=\cos s\cos t+\sin s\sin t\cos\left(\mathsf{d}(x,y)\right)$$
for $p=(x,s), q=(y,t)\in\Sigma(\M)$ and with a measure $d\nu(x,s):=d\mathsf{vol}(x)\otimes(\sin^Ns ds)$.

\begin{theorem}
The Ricci curvature of $\M$ is bounded from below by $n-1$ if and only if the metric measure space $(\Sigma(\M),\mathsf{d_\Sigma},\nu)$ satisfies the curvature-dimension condition $\mathsf{CD}(n,n+1)$.
\end{theorem}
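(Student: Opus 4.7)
The plan is to parallel the proof of Theorem~\ref{Cheeger}, replacing the Euclidean distortion coefficients by their spherical analogues. Away from the two poles $\mathcal{S}$ and $\mathcal{N}$, the spherical cone is a smooth weighted Riemannian manifold: the warped product $((0,\pi)\times\M,\,ds^{2}+\sin^{2}s\cdot g_\M)$ equipped with the weight $\sin^{N}s$. A direct computation using the O'Neill formulas for warped products combined with the Bakry--\'Emery correction for the weight shows that, under $\mathrm{Ric}_\M\ge n-1$ and with the choice $N=n$, the effective $(n+1)$-Ricci tensor equals $n$ in the radial direction and is bounded below by $n$ in every tangential direction. By the Riemannian characterization of the curvature-dimension condition this already gives $\mathsf{CD}(n,n+1)$ on the smooth locus $\Sigma(\M)\setminus\{\mathcal{S},\mathcal{N}\}$.

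The core of the argument is to lift this pointwise bound to a genuine displacement convexity inequality for the R\'enyi entropy along every $\mathsf{L}_{2}$-Wasserstein geodesic in $\Sigma(\M)$. I would first describe these geodesics explicitly from the spherical law of cosines defining $\mathsf{d}_\Sigma$: a $\mathsf{d}_\Sigma$-geodesic between $(x,s)$ and $(y,t)$ decomposes into a great-circle arc in the $(\cos,\sin)$-plane together with a $\mathsf{d}$-geodesic on $\M$. Disintegrating an optimal coupling along such arcs reduces the $\mathsf{CD}(n,n+1)$ inequality for $(\Sigma(\M),\mathsf{d}_\Sigma,\nu)$ to two pieces: a one-dimensional spherical convexity statement for the weight $\sin^{N}s$ (encoding the unit curvature of the polar direction) and a $\mathsf{CD}(n-1,n)$ inequality on $(\M,\mathsf{d},\mathsf{vol})$, which is precisely what the hypothesis $\mathrm{Ric}_\M\ge n-1$ provides.

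The converse direction is easier: testing $\mathsf{CD}(n,n+1)$ on $\Sigma(\M)$ with couplings concentrated in a thin equatorial slab $\M\times(\pi/2-\varepsilon,\pi/2+\varepsilon)$ and rescaling in the $s$-variable yields, in the limit $\varepsilon\downarrow 0$, the $\mathsf{CD}(n-1,n)$ inequality on $(\M,\mathsf{d},\mathsf{vol})$, which on a smooth $n$-manifold is equivalent to $\mathrm{Ric}_\M\ge n-1$.

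The main obstacle will be the behavior at the two poles $\mathcal{S}$ and $\mathcal{N}$, where $\nu$ vanishes and Wasserstein geodesics can cross the singularity. I would handle this by approximating arbitrary densities $\mu_{0},\mu_{1}\ll\nu$ by densities supported in $\M\times[\varepsilon,\pi-\varepsilon]$, establishing $\mathsf{CD}(n,n+1)$ for such truncated configurations from the smooth calculation above, and passing to the limit via the stability of the curvature-dimension condition under $\D$-convergence that was recalled in the introduction. A secondary technical point is the verification that the reduced spherical distortion coefficients factor correctly along $\mathsf{d}_\Sigma$-geodesics; this is a trigonometric identity entirely parallel to the Euclidean one used in the proof of Theorem~\ref{Cheeger}.
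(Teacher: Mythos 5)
Your proposal correctly identifies Petean's warped-product Ricci computation as the geometric heart of the matter and correctly flags the two poles as the main obstruction, but the route you sketch from there has two genuine gaps, and it diverges substantially from the paper's argument.

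First, your ``core of the argument'' is a tensorization step: you propose to disintegrate an optimal coupling along the radial great-circle arc and the $\M$-direction, and to factor the $\mathsf{CD}(n,n+1)$ inequality into a one-dimensional spherical entropy convexity and a $\mathsf{CD}(n-1,n)$ inequality on $\M$. This is precisely the warped-product tensorization of the curvature-dimension condition, and it is not a ``trigonometric identity'' — it is a hard open problem at the level of generality needed. The distortion coefficients $\tau^{(t)}_{K,N}$ do not factor along the radial and angular components of a $\mathsf{d}_\Sigma$-geodesic, and the displacement interpolation in $\Sigma(\M)$ is not the fiber-wise product of a one-dimensional interpolation and an interpolation in $\M$: the $\M$-speed of a cone geodesic varies along the arc (it depends on $s$ through $\sin s$), so the angular projection of a Wasserstein geodesic in $\Sigma(\M)$ is generally \emph{not} a Wasserstein geodesic in $\M$ parametrized with constant speed. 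The paper avoids this entirely: it does not decompose anything. Instead it embeds the truncated cone $\Sigma(\M)\setminus(B_\varepsilon(\mathcal{S})\cup B_\varepsilon(\mathcal{N}))$ isometrically into a complete smooth Riemannian manifold $\tilde\M_\varepsilon$ with Ricci bounded below, applies the known Riemannian equivalence of $\mathsf{CD}$ with a Ricci lower bound inside the region where Petean's formula gives $\mathsf{Ric}\ge n$, and then passes to the limit $\varepsilon\to 0$. You mention the Riemannian characterization in passing but do not use the embedding step, which is what makes the Riemannian characterization applicable to an incomplete manifold with boundary.

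Second, and independently, your treatment of the poles is incomplete. Truncating $\mu_0,\mu_1$ to $\M\times[\varepsilon,\pi-\varepsilon]$ does not prevent a Wasserstein geodesic between them from passing through a pole: a single geodesic between $(\phi,\varepsilon)$ and $(\phi_a,\varepsilon)$ with $\mathsf{d}(\phi,\phi_a)=\pi$ runs straight through $\mathcal{S}$. The paper's Theorem \ref{poles} is not a side remark but the crucial technical lemma: it shows that the lifted coupling $\tilde{\mathsf{q}}$ of an optimal dynamical plan charges no triple $(x_0,x_s,x_1)$ with $x_s\in\{\mathcal{S},\mathcal{N}\}$. The proof hinges on the chain Lemma \ref{antispher} (a geodesic through a pole forces antipodal $\M$-projections), Ohta's uniqueness of antipodes (Lemma \ref{ohta}), and a $\mathsf{d}_\Sigma^2$-cyclical monotonicity argument (Lemma \ref{uniqspher}) showing that no two distinct pole-crossing pairs can both lie in the support of an optimal coupling. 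Without this statement you cannot conclude that the restricted plans $\tilde{\mathsf{q}}_\varepsilon$, their projections $\mu^\varepsilon_i$, and the couplings $\mathsf{q}_\varepsilon$ converge to the unrestricted ones, which is exactly the input needed to pass to the limit in the entropy inequality. Appealing to $\D$-stability, as you propose, would additionally require showing that the truncated slabs (with which intrinsic metric?) form a $\D$-convergent sequence of $\mathsf{CD}(n,n+1)$ spaces, which raises its own boundary-curvature issues; the paper's restriction-then-embed argument sidesteps this. Your suggested converse direction (equatorial test couplings) is plausible in spirit but is also not what the paper does; the paper simply cites Petean's pointwise equivalence $\mathsf{Ric}_{\Sigma_0}\ge n\iff\mathsf{Ric}_\M\ge n-1$ for that implication.
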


\subsection{Basic Definitions and Notations}

Throughout this paper,  $(\mathsf{M},\mathsf{d},\mathsf{m})$ denotes a \textit{metric measure space} consisting of a complete separable metric space $(\mathsf{M},\mathsf{d})$ and a locally finite measure $\mathsf{m}$ on $(\mathsf{M},\mathcal{B}(\mathsf{M}))$, that is, the volume $\mathsf{m}(B_r(x))$ of balls centered at $x$ is finite for all $x\in \mathsf{M}$ and all sufficiently small $r>0$. The metric space $(\mathsf{M},\mathsf{d})$  is called a \textit{length space} if and only if $\mathsf{d}(x,y)=\inf\mathsf{Length}(\gamma)$ for all $x,y\in\mathsf{M}$, where the infimum runs over all curves $\gamma$ in $\mathsf{M}$ connecting $x$ and $y$. $(\mathsf{M},\mathsf{d})$ is called a \textit{geodesic space} if and only if every two points $x,y\in\mathsf{M}$ are connected by a curve $\gamma$ with $\mathsf{d}(x,y)=\mathsf{Length}(\gamma)$. Such a curve is called \textit{geodesic}.

A \textit{non-branching} metric measure space $(\mathsf{M},\mathsf{d},\mathsf{m})$ consists of a geodesic metric space $(\mathsf{M},\mathsf{d})$ such that for every tuple $(z,x_0,x_1,x_2)$ of points in $\mathsf{M}$ for which $z$ is a midpoint of $x_0$ and $x_1$ as well as of $x_0$ and $x_2$, it follows that $x_1=x_2$.

The \textit{diameter} $\mathsf{diam}(\mathsf{M},\mathsf{d},\mathsf{m})$ of a metric measure space $(\mathsf{M},\mathsf{d},\mathsf{m})$ is defined as the diameter of its support, namely, $\mathsf{diam}(\mathsf{M},\mathsf{d},\mathsf{m}):=\sup\{\mathsf{d}(x,y): x,y\in\mathsf{supp}(\mathsf{m})\}$.

$(\mathcal{P}_2(\mathsf{M},\mathsf{d}),\mathsf{d}_{\mathsf{W}})$ denotes the \textit{$\mathsf{L}_2$-Wasserstein space} of probability measures $\nu$ on $(\mathsf{M},\mathcal{B}(\mathsf{M}))$ with finite second moments which means that $\int_\mathsf{M}\mathsf{d}^2(x_0,x)d\nu(x)<\infty$
for some (hence all) $x_0\in \mathsf{M}$. The \textit{$\mathsf{L}_2$-Wasserstein distance} $\mathsf{d}_{\mathsf{W}}(\mu,\nu)$ between two probability measures
$\mu,\nu\in\mathcal{P}_2(\mathsf{M},\mathsf{d})$ is defined as
$$\mathsf{d}_{\mathsf{W}}(\mu,\nu)=\inf\left\{\left(\int_{\mathsf{M}\times \mathsf{M}}\mathsf{d}^2(x,y)d\mathsf{q}(x,y)\right)^{1/2}:
\text{$\mathsf{q}$ coupling of $\mu$ and $\nu$}\right\}.$$
Here the infimum ranges over all \textit{couplings} of $\mu$ and $\nu$ which are probability measures on $\mathsf{M}\times \mathsf{M}$ with marginals $\mu$ and $\nu$. $(\mathcal{P}_2(\M,\mathsf{d}),\mathsf{d_W})$ is a complete separable metric space. The subspace of $\mathsf{m}$-absolutely continuous measures is denoted by $\mathcal{P}_2(\mathsf{M},\mathsf{d},\mathsf{m})$.

For general  $K\in\mathbb{R}$ and $N\in[1,\infty)$ the condition $\mathsf{CD}(K,N)$ states that for each pair
$\nu_0,\nu_1\in\mathcal{P}_2(\mathsf{M},\mathsf{d},\mathsf{m})$  there exist  an optimal
coupling $\mathsf{q}$ of $\nu_0=\rho_0\mathsf{m}$ and $\nu_1=\rho_1\mathsf{m}$ and a geodesic $\nu_t=\rho_t\, m$ in $\mathcal{P}_2(\mathsf{M},\mathsf{d},\mathsf{m})$ connecting them such that
\begin{equation} \label{CD}
\begin{split}
\mathsf{S}_{N'}(\nu_t|\mathsf{m})&:=-\int_\mathsf{M}\rho_t^{1-1/N'}d\m\\
&\leq-\int_{\mathsf{M}\times
\mathsf{M}}\left[\tau^{(1-t)}_{K,N'}(\mathsf{d}(x_0,x_1))\rho^{-1/N'}_0(x_0)+
\tau^{(t)}_{K,N'}(\mathsf{d}(x_0,x_1))\rho^{-1/N'}_1(x_1)\right]d\mathsf{q}(x_0,x_1)
\end{split}
\end{equation}
for all $t\in [0,1]$ and all $N'\geq N$.
In order to define the \textit{volume distortion coefficients} $\tau^{(t)}_{K,N}(\cdot)$ we introduce for $\theta\in\R_+$,
\begin{equation*}
\mathfrak{S}_k(\theta):=
\begin{cases}
\frac{\sin(\sqrt{k}\theta)}{\sqrt{k}\theta}& \text{if $k>0$}\\
1& \text{if $k=0$}\\
\frac{\sinh(\sqrt{-k}\theta)}{\sqrt{-k}\theta}& \text{if $k<0$}
\end{cases}
\end{equation*}
and set for $t\in[0,1]$,
\begin{equation*}
\sigma^{(t)}_{K,N}(\theta):=
\begin{cases}
\infty& \text{if $K\theta^2\geq N\pi^2$}\\
t\frac{\mathfrak{S}_{K/N}(t\theta)}{\mathfrak{S}_{K/N}(\theta)}& \text{else}
\end{cases}
\end{equation*}
as well as $\tau^{(t)}_{K,N}(\theta):=t^{1/N}\sigma^{(t)}_{K,N-1}(\theta)^{1-1/N}$. By replacing the volume distortion coefficients $\tau^{(t)} _{K,N}(\cdot)$ by the slightly smaller coefficients $\sigma^{(t)} _{K,N}(\cdot)$ in the definition of $\mathsf{CD}(K,N)$ the reduced curvature-dimension condition $\mathsf{CD}^*(K,N)$ is obtained. This condition was introduced and studied in \cite{bs}.

The definitions of the condition $\mathsf{CD}(K,N)$ in \cite{sb} and \cite{lva} slightly differ. We follow the notation of \cite{sb}. For non-branching spaces, both concepts coincide.
In this case, it suffices to verify (\ref{CD}) for $N'=N$ since this already implies (\ref{CD}) for all $N'\ge N$.

\subsection{Some Technical Ingredients}

We recall two technical statements we will refer to in the course of this note. According to \cite[Lemma 2.11]{sa}, the following lemma holds true:

\begin{lemma} \label{optimaltransport}
\begin{itemize}
\item[(i)] For each pair $\mu,\nu\in\mathcal{P}_2(\M,\mathsf{d})$ there exists a coupling $\mathsf{q}$ - called {\rm optimal coupling} - such that
$$\mathsf{d^2_W}(\mu,\nu)=\int_{\M\times \M}\mathsf{d^2}(x,y) \ d\mathsf{q}(x,y).$$
\item[(ii)]For each geodesic $\Gamma:[0,1]\rightarrow\mathcal{P}_2(\M,\mathsf{d})$, each $k\in\N$ and each partition
$$0=t_0<t_1<\dots<t_k=1$$
there exists a probability measure $\mathsf{\hat q}$ on $\M^{k+1}$ with the following properties:
\begin{itemize}
\item[$\ast$]The projection on the $i$-th factor is $\Gamma(t_i)$ for all $i=0,1,\dots,k$.
\item[$\ast$]For $\mathsf{\hat q}$-almost every $x=(x_0,\dots,x_k)\in\M^{k+1}$ and every $i,j=0,1,\dots,k$,
$$\mathsf{d}(x_i,x_j)=|t_i-t_j|\mathsf{d}(x_0,x_k).$$
In particular, for every pair $i,j\in\{0,1,\dots,k\}$ the projection on the $i$-th and $j$-th factor is an optimal coupling of $\Gamma(t_i)$ and $\Gamma(t_j)$.
\item[(iii)]If $(\M,\mathsf{d})$ is a non-branching space, then we have in the situation of (ii) for $\mathsf{\hat q}$-almost every $(x_0,x_1,x_2)$ and $(y_0,y_1,y_2)$ in $\M^3$,
$$x_1=y_1 \Rightarrow (x_0,x_2)=(y_0,y_2).$$
\end{itemize}
\end{itemize}
\end{lemma}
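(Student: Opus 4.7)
The plan is to prove the three parts of the lemma in sequence using standard tools from the theory of optimal transport on Polish spaces.

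Part (i) follows by the direct method of the calculus of variations. Since $(\M,\mathsf{d})$ is complete and separable, each of $\mu$ and $\nu$ is tight, and consequently the set $\Pi(\mu,\nu)$ of couplings with these marginals is tight. It is also weakly closed, being cut out by countably many continuous linear conditions on the marginals, so by Prokhorov's theorem it is weakly compact. The cost functional $\mathsf{q}\mapsto\int \mathsf{d}^2(x,y)\,d\mathsf{q}(x,y)$ is lower semicontinuous in the weak topology (truncate $\mathsf{d}^2$ by $\mathsf{d}^2\wedge N$, apply the portmanteau theorem, and take the supremum over $N\to\infty$), so a minimizer exists.

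For part (ii), the approach is iterated gluing. Starting from optimal couplings $\mathsf{q}_i\in\Pi(\Gamma(t_{i-1}),\Gamma(t_i))$ provided by (i), I would disintegrate each against the shared marginal $\Gamma(t_i)$ and multiply the resulting kernels; iterating over $i$ yields a probability measure $\hat{\mathsf{q}}$ on $\M^{k+1}$ whose projection onto the $i$-th factor is $\Gamma(t_i)$ and whose projection onto consecutive factors $(i-1,i)$ is the optimal coupling $\mathsf{q}_i$. The decisive step is to upgrade these couplings to the pointwise geodesic identity. Since $\Gamma$ is a Wasserstein geodesic, $\|\mathsf{d}(x_{i-1},x_i)\|_{L^2(\hat{\mathsf{q}})}=(t_i-t_{i-1})\,\mathsf{d_W}(\Gamma(0),\Gamma(1))$ for every $i$. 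On the other hand, the projection of $\hat{\mathsf{q}}$ onto the factors $0$ and $k$ is some (not a priori optimal) coupling of $\Gamma(0)$ and $\Gamma(1)$, hence $\|\mathsf{d}(x_0,x_k)\|_{L^2(\hat{\mathsf{q}})}\geq \mathsf{d_W}(\Gamma(0),\Gamma(1))$. Combining the pointwise triangle inequality $\mathsf{d}(x_0,x_k)\le\sum_i \mathsf{d}(x_{i-1},x_i)$ with Minkowski's inequality in $L^2(\hat{\mathsf{q}})$ produces a chain of inequalities that must all collapse to equalities. Reading off the equality cases yields, $\hat{\mathsf{q}}$-almost surely, both a saturated triangle inequality and the proportionality $\mathsf{d}(x_{i-1},x_i)=(t_i-t_{i-1})\,\mathsf{d}(x_0,x_k)$; the pairwise identity $\mathsf{d}(x_i,x_j)=|t_i-t_j|\,\mathsf{d}(x_0,x_k)$ and optimality of every pairwise projection then follow.

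Part (iii) refines the picture in the non-branching case. For $\hat{\mathsf{q}}$-a.e.\ triple $(x_0,x_1,x_2)$, part (ii) places the three points on a common geodesic segment with $x_1$ at parameter $t_1$. Selecting appropriate midpoints along this segment and invoking the non-branching hypothesis forces $x_2$ to be pointwise uniquely determined by $(x_0,x_1)$, and symmetrically $x_0$ by $(x_1,x_2)$. The genuine obstacle is converting these pointwise uniqueness assertions into the joint statement of (iii), namely that the conditional law of $(x_0,x_2)$ given $x_1$ under $\hat{\mathsf{q}}$ is $\Gamma(t_1)$-almost surely a Dirac mass. I would disintegrate $\hat{\mathsf{q}}$ over the $t_1$-marginal, and on a hypothetical positive-measure set of branching extract by a measurable selection two distinct Borel maps $x_1\mapsto(x_0,x_2)$; applying part (ii) to the two resulting geodesic systems would then produce, on a set of positive measure, two midpoints of a common $x_0$ with distinct endpoints, contradicting the non-branching hypothesis. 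This measurable selection step is the main technical hurdle.
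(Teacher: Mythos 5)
First, note that the paper does not prove this lemma at all: it is quoted verbatim from \cite[Lemma 2.11]{sa}, so your proposal has to be measured against the argument in that reference rather than against anything in this text. Your parts (i) and (ii) are correct and essentially identical to the standard proof: existence of an optimal coupling by tightness, Prokhorov compactness of $\Pi(\mu,\nu)$ and lower semicontinuity of the quadratic cost; then the gluing of consecutive optimal couplings along the shared marginals, followed by the collapse of the chain
$\mathsf{d_W}(\Gamma(0),\Gamma(1))\le\|\mathsf{d}(x_0,x_k)\|_{L^2(\hat{\mathsf{q}})}\le\|\sum_i\mathsf{d}(x_{i-1},x_i)\|_{L^2}\le\sum_i\|\mathsf{d}(x_{i-1},x_i)\|_{L^2}=\mathsf{d_W}(\Gamma(0),\Gamma(1))$,
whose equality cases (saturated triangle inequality plus the equality case of Minkowski) give the a.e.\ proportionality and the optimality of every pairwise projection. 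That part is sound.

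Part (iii) has a genuine gap, and it is not merely the ``measurable selection'' hurdle you flag. The statement concerns two triples $(x_0,x_1,x_2)$ and $(y_0,y_1,y_2)$ with $x_1=y_1$ but \emph{a priori different initial points} $x_0\neq y_0$; your sketch only ever discusses ``two midpoints of a common $x_0$,'' i.e.\ the case $x_0=y_0$, and the non-branching hypothesis alone says nothing when the initial points differ. The missing engine is the $\mathsf{d}^2$-cyclical monotonicity of the two-point projections of $\hat{\mathsf{q}}$, which are optimal couplings by part (ii) (cf.\ Lemma \ref{opttrans}). Concretely, writing $z:=x_1=y_1$, $a:=\mathsf{d}(x_0,x_2)$, $b:=\mathsf{d}(y_0,y_2)$ and $t:=t_1$, cyclical monotonicity of the $(0,2)$-projection gives $a^2+b^2\le\mathsf{d}^2(x_0,y_2)+\mathsf{d}^2(y_0,x_2)$, while the triangle inequality through $z$ together with the identities of (ii) gives $\mathsf{d}(x_0,y_2)\le ta+(1-t)b$ and $\mathsf{d}(y_0,x_2)\le tb+(1-t)a$; combining these yields $(a-b)^2\le 0$, hence $a=b$ and equality in both triangle inequalities. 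Only at this point does one know that $x_0,z,y_2$ and $x_0,z,x_2$ lie on geodesics of equal length with $z$ at the same parameter, so that an iterated application of the midpoint form of non-branching forces $x_2=y_2$ (and symmetrically $x_0=y_0$). Once this pointwise argument is available on the full-measure set where (ii) holds and where the projections are cyclically monotone, the conclusion holds for $\hat{\mathsf{q}}\otimes\hat{\mathsf{q}}$-a.e.\ pair of triples directly; no disintegration over the $t_1$-marginal and no measurable selection are needed. As written, your part (iii) neither identifies this mechanism nor completes the alternative route it proposes.
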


In this framework, the notion of \textit{cyclical monotonicity} plays an important role in the sense of Lemma \ref{opttrans} taken from \cite[Theorem 5.10]{vib}:

\begin{definition}[Cyclical monotonicity]
Let $(\M,\mathsf{d})$ be a metric space. A subset $\Xi\subset\M\times\M$ is called $\mathsf{d}^2$-cyclically monotone if and only if for any $k\in\N$ and for any family $(x_1,y_1),\dots,(x_k,y_k)$ of points in $\Xi$ the inequality
$$\sum^k_{i=1}\mathsf{d}^2(x_i,y_i)\leq\sum^k_{i=1}\mathsf{d}^2(x_i,y_{i+1})$$
holds with the convention $y_{k+1}=y_1$.
\end{definition}

\begin{lemma}[Optimal transference plan] \label{opttrans}
The optimal coupling $\mathsf{q}$ of two probability measures $\nu_0,\nu_1\in\W_2(\M,\mathsf{d},\m)$ is concentrated on a $\mathsf{d}^2$-cyclically monotone set.
\end{lemma}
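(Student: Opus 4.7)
My plan is to argue by contradiction. Suppose $\mathsf{q}$ is optimal but its support $\Xi$ fails to be $\mathsf{d}^2$-cyclically monotone; then by definition there exist points $(x_1,y_1),\dots,(x_k,y_k)\in\Xi$ with
$$\sum_{i=1}^k\mathsf{d}^2(x_i,y_i)\;>\;\sum_{i=1}^k\mathsf{d}^2(x_i,y_{i+1})$$
(indices mod $k$). The strategy is to build a competing coupling $\mathsf{q}'$ of $\nu_0,\nu_1$ with strictly smaller cost by locally rerouting transport mass along the cycle $x_i\mapsto y_{i+1}$ instead of $x_i\mapsto y_i$.

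First I would localize. By continuity of $\mathsf{d}$, I can choose pairwise disjoint open neighborhoods $U_i\ni x_i$ and $V_i\ni y_i$ small enough that the above strict inequality persists with a uniform gap $\delta>0$ whenever each $(x_i',y_i')$ ranges over $U_i\times V_i$. Since $(x_i,y_i)\in\mathsf{supp}(\mathsf{q})$, the masses $\alpha_i:=\mathsf{q}(U_i\times V_i)$ are strictly positive. Define $\tilde{\mathsf{q}}_i:=\alpha_i^{-1}\mathsf{q}|_{U_i\times V_i}$ and let $\mu_i$, $\nu_i$ denote its two marginals (supported in $U_i$ and $V_i$ respectively).

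Next, with $\alpha:=\min_i\alpha_i>0$, I would introduce the ``swap''
$$\mathsf{q}'\;:=\;\mathsf{q}\;-\;\alpha\sum_{i=1}^k\tilde{\mathsf{q}}_i\;+\;\alpha\sum_{i=1}^k\mu_i\otimes\nu_{i+1},$$
and verify three properties. First, $\mathsf{q}'\geq 0$ as a measure: on $U_i\times V_i$ one has $\alpha\tilde{\mathsf{q}}_i\leq\alpha_i\tilde{\mathsf{q}}_i=\mathsf{q}|_{U_i\times V_i}$, and pairwise disjointness of the products $U_i\times V_i$ allows summation without overshooting $\mathsf{q}$. Second, the marginals of $\mathsf{q}'$ coincide with those of $\mathsf{q}$, because the removed piece has marginals $\alpha\sum_i\mu_i$ and $\alpha\sum_i\nu_i$, while the added piece $\alpha\sum_i\mu_i\otimes\nu_{i+1}$ has exactly the same projections after the cyclic shift $\sum_i\nu_{i+1}=\sum_i\nu_i$. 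Third, the cost difference
$$\int\mathsf{d}^2\,d\mathsf{q}'\;-\;\int\mathsf{d}^2\,d\mathsf{q}\;=\;\alpha\sum_{i=1}^k\left[\int\mathsf{d}^2\,d(\mu_i\otimes\nu_{i+1})\;-\;\int\mathsf{d}^2\,d\tilde{\mathsf{q}}_i\right]$$
is strictly negative by the uniform gap on the chosen neighborhoods, contradicting the optimality of $\mathsf{q}$.

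The main technical obstacle is arranging the swap cleanly. One needs the products $U_i\times V_i$ to be pairwise disjoint (available unless two pairs $(x_i,y_i)$ literally coincide, in which case the cycle can be shortened by dropping duplicates), together with simultaneous control of $\mathsf{d}^2$ on both the ``old'' pairings $U_i\times V_i$ and the ``new'' pairings $U_i\times V_{i+1}$. Both difficulties are resolved by shrinking each neighborhood until $\mathsf{d}^2$ oscillates by less than $\delta/(3k)$ on every product set involved. Once these estimates are in place, $\mathsf{q}'$ is a coupling of $\nu_0$ and $\nu_1$ with total cost below $\mathsf{d}^2_{\mathsf{W}}(\nu_0,\nu_1)$, the desired contradiction.
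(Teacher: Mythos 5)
The paper does not prove this lemma itself; it simply cites Villani's \emph{Optimal Transport, old and new}, Theorem 5.10. Your argument---assume the support fails $\mathsf{d}^2$-cyclical monotonicity, localize the violating cycle to pairwise disjoint neighborhood products where the gap $\delta$ persists, and perform the cyclic mass swap $\mathsf{q}\mapsto\mathsf{q}-\alpha\sum_i\tilde{\mathsf{q}}_i+\alpha\sum_i\mu_i\otimes\nu_{i+1}$ to produce a competitor with strictly lower cost---is the standard perturbation proof of that fact for a continuous cost on a Polish space, and it is correct: the marginal check ($\sum_i\nu_{i+1}=\sum_i\nu_i$), the nonnegativity check via $\alpha\le\alpha_i$ and disjointness of the $U_i\times V_i$, and the cost estimate with oscillation $<\delta/(3k)$ on each $U_i\times V_i$ and $U_i\times V_{i+1}$ all go through. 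This is essentially the argument underlying the cited Theorem 5.10 (Villani's statement is more general in allowing merely lower semicontinuous costs, which requires a somewhat more delicate measurable-selection variant, but for $\mathsf{d}^2$ the continuous-cost swap you describe suffices).
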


\section{Euclidean Cones over Metric Measure Spaces} \label{cones}

\subsection{$N$-Euclidean Cones over Metric Measure Spaces} \label{euclcones}

\begin{definition}[$N$-Euclidean cone]
For a metric measure space $(\mathsf{M},\mathsf{d},\mathsf{m})$ with $\mathsf{diam}(\M)\leq\pi$ and $N\geq1$, the \textit{$N$-Euclidean cone} $(\mathsf{Con}(\M),\mathsf{d_{Con}},\nu)$ is a metric measure space defined as follows:
\begin{itemize}
\item[$\diamond$] $\mathsf{Con}(\M):=\mathsf{M}\times[0,\infty)/ \mathsf{M}\times\{0\}$
\item[$\diamond$] For $(x,s),(x',t)\in \mathsf{M}\times[0,\infty)$
$$\mathsf{d_{Con}}((x,s),(x',t)):=\sqrt{s^2+t^2-2st\cos\left(\mathsf{d}(x,x')\right)}$$
\item[$\diamond$] $d\nu(x,s):=d\mathsf{m}(x)\otimes s^Nds$.
\end{itemize}
\end{definition}

\begin{example}
The most prominent example in this setting is the unit sphere $\s^n\subseteq\R^{n+1}$ endowed with the spherical angular metric $\mathsf{d}_\measuredangle$ that is, the distance between two points in $\s^n$ is given by the Euclidean angle between them, and with the Lebesgue measure restricted to $\s^n$. To construct the $n$-Euclidean cone over $\s^n$, we draw a ray from the origin $0$ in $\R^{n+1}$ through every point $x\in \s^n$. A point $a\in\mathsf{Con}(\s^n)$ can be described by a pair $(x,t)$ where $x$ is a point in $\s^n$ belonging to the ray $0a$ and $t=|a|$ is the Euclidean distance from the origin. By this construction we obtain the whole Euclidean space $\R^{n+1}$.
\vspace{-0.8cm}
\begin{center}
\hspace{-2cm}
\includegraphics[height=5cm]{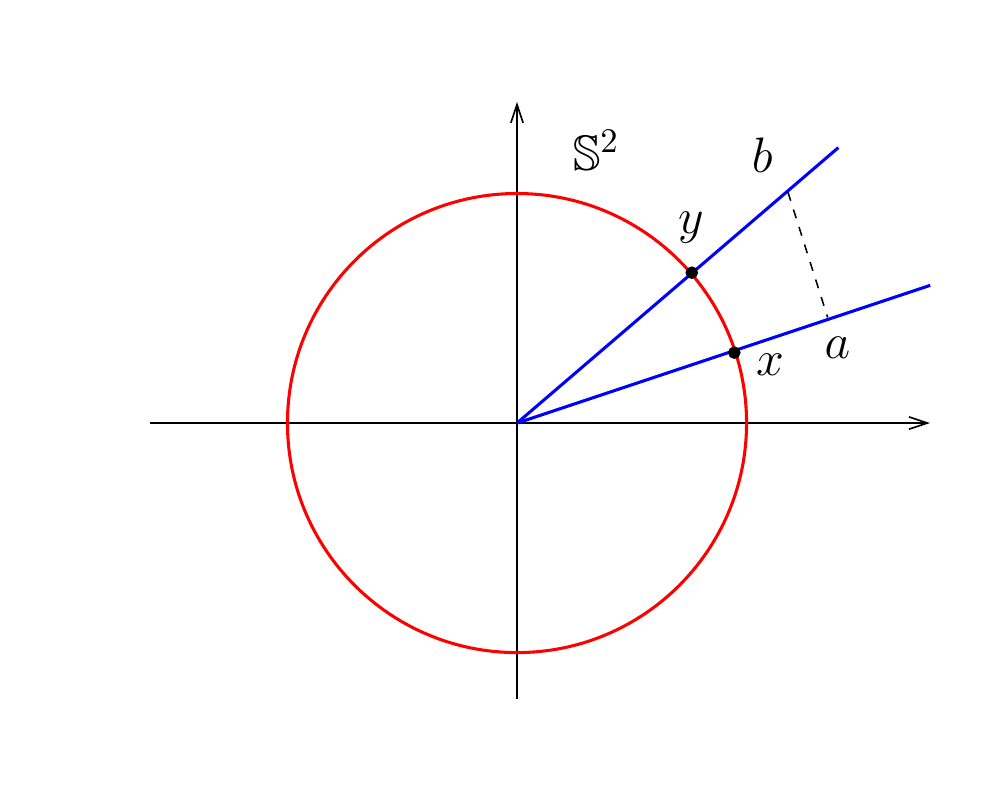}
\end{center}
\vspace{-0.5cm}
The Euclidean distance $|a-b|$ between two points $a=(x,t)$ and $b=(y,s)$ in $\mathsf{Con}(\s^n)$ is given in terms of the angular metric $\mathsf{d}_\measuredangle$ and the lengths $t$ and $s$ via the cosine formula
$$|a-b|=\sqrt{t^2+s^2-2ts\cos(\mathsf{d}_\measuredangle(x,y))}.$$
Thus, the definition of the metric $\mathsf{d_{Con}}$ and the measure $\nu$ ensures that the $n$-Euclidean cone over $\s^n$ is the Euclidean space $\R^{n+1}$ equipped with the Euclidean metric and the Lebesgue measure expressed in spherical coordinates.
\end{example}

\subsection{Optimal Transport on Euclidean Cones} \label{optimaltransportoncone}

Let $(\M,\mathsf{d},\mathsf{m})$ be a metric measure space with full support $\M=\mathsf{supp}(\m)$ satisfying the curvature-dimension condition $\mathsf{CD}(N-1,N)$ for some $N\geq 1$. The diameter estimate by Bonnet-Myers implies that $\mathsf{diam}(\M)\leq\pi$. We denote by $(\mathsf{Con}(\M),\mathsf{d_{Con}},\nu)$ the $N$-Euclidean cone over $(\M,\mathsf{d},\mathsf{m})$. For each pair of probability measures $\mu_0$ and $\mu_1$ in $\W_2(\mathsf{Con}(\M),\mathsf{d_{Con}},\nu)$ there exists a geodesic $\Gamma:[0,1]\to\W_2(\mathsf{Con}(\M),\mathsf{d_{Con}})$ connecting them. The probability measures $\Gamma(t)$ are not necessarily absolutely continuous for all $t$ because $\W_2(\mathsf{Con}(\M),\mathsf{d_{Con}},\nu)$ is not necessarily a geodesic space. Thus, theoretically, it might happen that all mass is transported from $\mu_0$ to $\mu_1$ through the origin.
\vspace{-0.8cm}
\begin{center}
\hspace{-2cm}
\includegraphics[height=4.5cm]{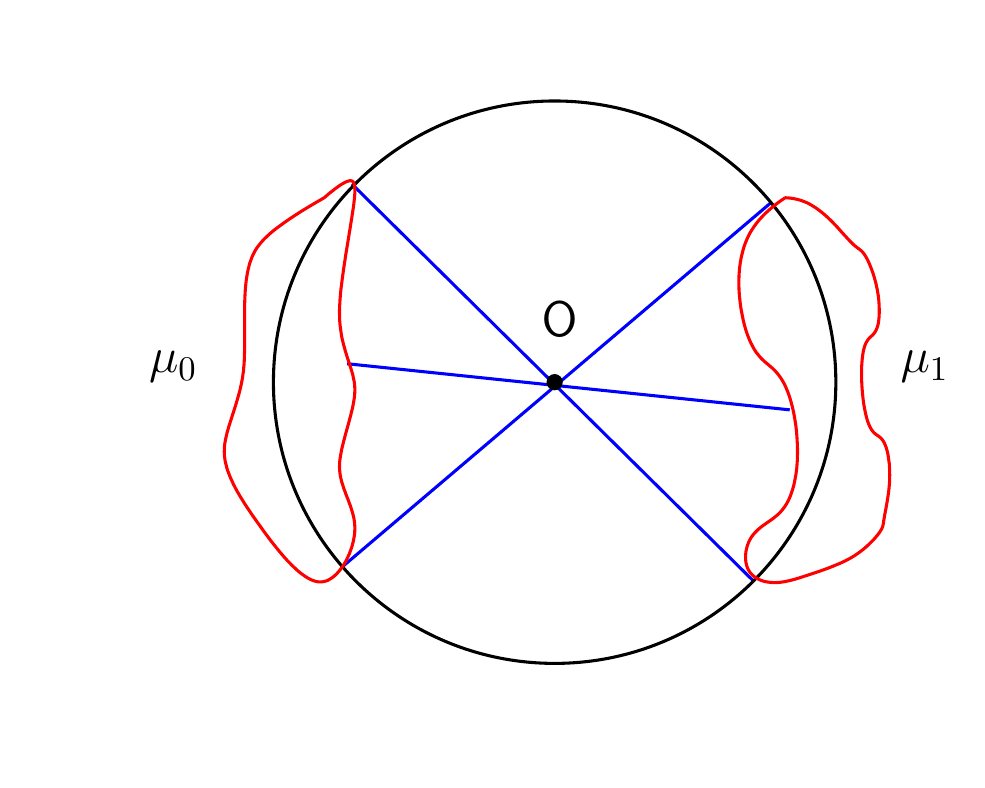}
\end{center}
\vspace{-0.5cm}
But due to Theorem \ref{origin}, this phenomenon does not occur. We consider the partition $0=t_0<t_{1/2}=\tfrac{1}{2}<t_1=1$ of $[0,1]$. Due to Lemma \ref{optimaltransport}, there exists a probability measure $\mathsf{\hat q}$ on $\mathsf{Con}(\M)^3$ with the following properties:
\begin{itemize}
\item[$\ast$]The projection on the $i$-th factor is $\Gamma(t_i)$ for all $i=0,\tfrac{1}{2},1$.
\item[$\ast$]For $\mathsf{\hat q}$-almost every $x=(x_0,x_{1/2},x_1)\in\mathsf{Con}(\M)^3$, the point $x_{1/2}$ is a midpoint of $x_0$ and $x_1$.
In particular, the projection on the $i$-th and $j$-th factor is an optimal coupling of $\Gamma(t_i)$ and $\Gamma(t_j)$ for $i,j=0,\tfrac{1}{2},1$.
\end{itemize}

In the sequel we use the notation $\mathsf{O}:=\M\times\{0\}\in\mathsf{Con}(\M)$. The following theorem states that the optimal transport from $\mu_0$ to $\mu_1$ does not touch the origin.

\begin{theorem} \label{origin}
It holds that
$$\mathsf{\hat q}\left(\{(x_0,x_{1/2},x_1)\in\mathsf{Con}(\M)^3:x_{1/2}=\mathsf{O}\}\right)=0.$$
\end{theorem}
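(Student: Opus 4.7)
The plan is to argue by contradiction: assume $p := \hat{\mathsf{q}}\bigl(\{(x_0, x_{1/2}, x_1) : x_{1/2} = \mathsf{O}\}\bigr) > 0$ and derive a contradiction with $\mu_0 \ll \nu$. The first step is to extract geometric constraints on triples with middle point at the origin. For $\hat{\mathsf{q}}$-a.e.\ such triple, write $x_0 = (a, s)$ and $x_1 = (a', t)$. Since $\mathsf{d_{Con}}((y, u), \mathsf{O}) = u$ and $\mathsf{O}$ is a midpoint, necessarily $s = t$ and $\mathsf{d_{Con}}(x_0, x_1) = 2s$; the cone cosine formula then forces $\mathsf{d}(a, a') = \pi$.

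The next step is cyclical monotonicity. By Lemma \ref{optimaltransport}(ii), the projection of $\hat{\mathsf{q}}$ onto the first and third factors is an optimal coupling of $\mu_0$ and $\mu_1$, hence concentrated on a $\mathsf{d_{Con}}^2$-cyclically monotone set (Lemma \ref{opttrans}). Applied to any two O-going pairs $\bigl((a, s), (a', s)\bigr)$ and $\bigl((b, r), (b', r)\bigr)$, the $k=2$ cyclical monotonicity inequality reads
\[
(2s)^2 + (2r)^2 \;\leq\; \mathsf{d_{Con}}^2\bigl((a, s), (b', r)\bigr) + \mathsf{d_{Con}}^2\bigl((b, r), (a', s)\bigr),
\]
which expands via the cosine formula and rearranges, upon division by $2sr$, to
\[
\frac{s}{r} + \frac{r}{s} \;\leq\; -\cos \mathsf{d}(a, b') - \cos \mathsf{d}(b, a').
\]
The left-hand side is $\geq 2$ by AM--GM, while the right-hand side is $\leq 2$ since cosine is at least $-1$, so equality is forced throughout. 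In particular $s = r$, so there is a single common radius $s^* > 0$ with $\hat{\mathsf{q}}$-a.e.\ O-going pair $(x_0, x_1)$ satisfying $x_0, x_1 \in \M \times \{s^*\}$. The degenerate case $s = 0$ cannot contribute, since it would force $x_0 = \mathsf{O}$, but $\mu_0(\{\mathsf{O}\}) \leq \nu(\{\mathsf{O}\}) = 0$ by absolute continuity.

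The contradiction is then immediate: the first-factor marginal $\mu_0^{\mathsf{O}}$ of the restriction of $\hat{\mathsf{q}}$ to $\{x_{1/2} = \mathsf{O}\}$ is a positive measure of mass $p$, dominated by $\mu_0$ and concentrated on $\M \times \{s^*\}$. However, $\nu(\M \times \{s^*\}) = \mathsf{m}(\M) \int_{\{s^*\}} s^N \, ds = 0$, so $\mu_0 \ll \nu$ forces $\mu_0^{\mathsf{O}} = 0$, contradicting $p > 0$. The heart of the argument is thus the cyclical-monotonicity computation pinning down a single radius $s^*$; once that is in place, the measure-theoretic step is immediate. Note that only $\mu_0 \ll \nu$ and the cone structure enter the argument---the full strength of $\mathsf{CD}(N-1,N)$ on the base is not yet needed.
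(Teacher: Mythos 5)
Your argument is correct, and it takes a genuinely different route from the paper's. Both proofs start by observing (your first paragraph; the paper's Lemma~\ref{anti}) that when $\mathsf{O}$ is a midpoint of $x_0$ and $x_1$ the two radii agree, the cone distance is twice that radius, and the base points are antipodal; and both then exploit $\mathsf{d_{Con}}^2$-cyclical monotonicity of the optimal coupling $\mathsf{q} = (\mathsf{p}_{01})_*\hat{\mathsf{q}}$. The divergence is in \emph{what} cyclical monotonicity is used to pin down. The paper invokes Ohta's uniqueness-of-antipodes result (Lemma~\ref{ohta}) and argues that if two distinct antipodal pairs $(\phi_0,\phi_1)$ and $(\varphi_0,\varphi_1)$ both contributed to the $\mathsf{O}$-going mass, the cross-distances $\mathsf{d}(\phi_0,\varphi_1)$ and $\mathsf{d}(\varphi_0,\phi_1)$ would be strictly less than $\pi$, yielding a strict violation of the $k=2$ cyclical inequality (Lemma~\ref{uniq}); hence the first marginal of the $\mathsf{O}$-going mass concentrates on the single $\nu$-null ray $\{\phi_0\}\times(0,\infty)$. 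You instead run the $k=2$ inequality for two arbitrary $\mathsf{O}$-going pairs without any assumption on the base points and notice it pinches to equality; the AM--GM half forces the radii to coincide, so the first marginal concentrates on the $\nu$-null slice $\M\times\{s^*\}$. Your route is slimmer: it bypasses Ohta's lemma entirely, trading ``unique antipodal pair'' for ``unique radius,'' and both proofs then close with the same absolute-continuity step (which the paper leaves implicit after Lemma~\ref{uniq}, whereas you spell it out). One stylistic nit: you write $\mu_0(\{\mathsf{O}\}) \le \nu(\{\mathsf{O}\})$, but $\mu_0 \ll \nu$ does not give domination of measures; what you mean, and what suffices, is simply that $\nu(\{\mathsf{O}\})=0$ forces $\mu_0(\{\mathsf{O}\})=0$.
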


\begin{proof}
This proof is divided into three parts: Each part starts with the formulation of a lemma.
\begin{lemma} \label{anti}
Let two points $x_0=(\phi_0,r)$ and $x_1=(\phi_1,s)$ in $\mathsf{Con}(\M)$ be given and let $\gamma:[0,1]\to\mathsf{Con}(\M)$ be a geodesic connecting them, meaning $\gamma(0)=x_0$ and $\gamma(1)=x_1$. If $\gamma_{1/2}:=\gamma(\tfrac{1}{2})=\mathsf{O}$, then $\phi_0$ and $\phi_1$ are antipodes in $\M$ in the sense that $\mathsf{d}(\phi_0,\phi_1)=\pi$.
\end{lemma}
\begin{proof}[Proof of Lemma \ref{anti}] Due to the definition of $\mathsf{d_{Con}}$, we have first of all
$$r=\mathsf{d_{Con}}(x_0,\gamma_{1/2})=\tfrac{1}{2}\mathsf{d_{Con}}(x_0,x_1)=\mathsf{d_{Con}}(\gamma_{1/2},x_1)=s,$$
and secondly,
\begin{align*}
r^2=\mathsf{d_{Con}}(x_0,\gamma_{1/2})^2&=\frac{1}{4}\mathsf{d_{Con}}(x_0,x_1)^2\\
&=\tfrac{1}{4}\left[2r^2-2r^2\cos(\mathsf{d}(\phi_0,\phi_1))\right]=\tfrac{1}{2}\left[1-\cos(\mathsf{d}(\phi_0,\phi_1))\right]r^2\\
\end{align*}
which implies that
$$\cos(\mathsf{d}(\phi_0,\phi_1))=-1.$$
Due to \cite[Corollary 2.6]{sb}, the generalized Bonnet-Myers theorem on diameter bounds of metric measure spaces yields
$$\mathsf{d}(\phi_0,\phi_1)\leq\pi.$$
Therefore, we conclude that $\mathsf{d}(\phi_0,\phi_1)=\pi$.
\end{proof}
\begin{lemma}[Ohta] \label{ohta}
The set $\mathsf{S}(\phi,\pi):=\{\phi_a\in\M:\mathsf{d}(\phi,\phi_a)=\pi\}$ of antipodes of $\phi$ consists of at most one point for every $\phi\in\M$.
\end{lemma}
For a proof of Lemma \ref{ohta}, we refer to \cite[Theorem 4.5]{oa}.
\begin{lemma} \label{uniq}
Either $\{(x_0,x_{1/2},x_1)\in\mathsf{supp}(\mathsf{\hat q}):x_{1/2}=\mathsf{O}\}$ is the empty set or it coincides with $\{(\mathsf{O},\mathsf{O},\mathsf{O})\}$ or there exists at most one pair $(\phi_0,\phi_1)$ of antipodes in $\M$ with the following property: If $(\mathsf{O},\mathsf{O},\mathsf{O})\not =a=(a_0,a_{1/2},a_1)\in\mathsf{supp}(\mathsf{\hat q})\subseteq\mathsf{Con}(\M)^3$ satisfies $a_{1/2}=\mathsf{O}$ then $a_0=(\phi_0,r)$ and $a_1=(\phi_1,r)$ for some $r\in(0,\infty)$.
\end{lemma}
\begin{proof}[Proof of Lemma \ref{uniq}]
We assume that there are two different pairs $(\phi_0,\phi_1)$ and $(\varphi_0,\varphi_1)$ of antipodes in $\M$ such that there exist $a=(a_0,a_{1/2},a_1),b=(b_0,b_{1/2},b_1)\in\mathsf{supp}(\mathsf{\hat q})$ fulfilling $a_{1/2}=\mathsf{O}=b_{1/2}$ as well as $a_i=(\phi_i,r)$ and $b_i=(\varphi_i,s)$ for $i=0,1$ and some $r,s\in(0,\infty)$. We denote by $\mathsf{q}$ the projection of $\mathsf{\hat q}$ on the first and third factor, formally
$$\mathsf{q}:=\left(\mathsf{p_{01}}\right)_\ast\mathsf{\hat q},$$
where
\begin{align*}
\mathsf{p_{01}}:\mathsf{Con}(\M)^3&\rightarrow\mathsf{Con}(\M)^2\\
(x_0,x_{1/2},x_1)&\mapsto(x_0,x_1).
\end{align*}
Then $\mathsf{q}$ is an optimal coupling of $\mu_0$ and $\mu_1$:
$$\mathsf{\hat d_W}^2(\mu_0,\mu_1)=\int_{\mathsf{Con}(\M)\times\mathsf{Con}(\M)}\mathsf{d_{Con}}(x_0,x_1)^2d\mathsf{q}(x_0,x_1).$$
Lemma \ref{anti} and Lemma \ref{ohta} imply
\begin{align*}
\mathsf{d}^2_{\mathsf{Con}}&(a_0,b_1)+\mathsf{d}^2_{\mathsf{Con}}(b_0,a_1)\\
&=\left[r^2+s^2-2rs\cos\underset{<\pi}{\underbrace{\left(\mathsf{d}(\phi_0,\varphi_1)\right)}}\right] + \left[r^2+s^2-2rs\cos\underset{<\pi}{\underbrace{\left(\mathsf{d}(\varphi_0,\phi_1)\right)}}\right]\\
&<2(r+s)^2\leq4r^2+4s^2=\mathsf{d}^2_{\mathsf{Con}}(a_0,a_1)+\mathsf{d}^2_{\mathsf{Con}}(b_0,b_1).
\end{align*}
This contradicts the fact that the support of $\mathsf{q}$ is $\mathsf{d}^2_{\mathsf{Con}}$-cyclically monotone due to Theorem \ref{opttrans}.
\end{proof}
\
\\
Lemma \ref{uniq} finishes the proof of Theorem \ref{origin}.
\end{proof}

\subsection{Application to Riemannian Manifolds. I} \label{cheeger}

We consider a compact and complete $n$-dimensional Riemannian manifold $(\mathsf{M},\mathsf{d}, \V)$ denoting by $\mathsf{d}$ the Riemannian distance and by $\V$ the Riemannian volume.

\begin{theorem}
The $n$-Euclidean cone $(\mathsf{Con}(\M),\mathsf{d_{Con}},\nu)$ over a compact, complete $n$-dimensional Riemannian manifold $(\mathsf{M},\mathsf{d}, \V)$ satisfies $\mathsf{CD}(0,n+1)$ if and only if $\M$ fulfills $\mathsf{Ric}_\M\geq n-1$.
\end{theorem}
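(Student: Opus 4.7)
The plan is to reduce the claim to the smooth Riemannian setting via the classical Cheeger--Taylor computation, and then to use Theorem \ref{origin} to transfer $\mathsf{CD}(0,n+1)$ between the smooth punctured cone and the singular full cone. Away from $\mathsf{O}$, the cone carries the smooth Riemannian metric $g_{\mathrm{cone}} = dr^2 + r^2 g_\M$ of dimension $n+1$, whose Riemannian volume measure coincides with $\nu$. A direct computation due to Cheeger and Taylor \cite{cha,chb} shows that $\mathsf{Ric}_{g_{\mathrm{cone}}} \geq 0$ if and only if $\mathsf{Ric}_\M \geq n-1$, and by the characterisation of $\mathsf{CD}(K,N)$ for smooth Riemannian manifolds recalled in the introduction, this is in turn equivalent to $(\mathsf{Con}(\M) \setminus \{\mathsf{O}\}, \mathsf{d_{Con}}, \nu)$ satisfying $\mathsf{CD}(0,n+1)$. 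Hence the theorem reduces to showing that $\mathsf{CD}(0,n+1)$ is preserved when the singular point $\mathsf{O}$ is added back.

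For the ($\Rightarrow$) direction I restrict the $\mathsf{CD}(0,n+1)$ inequality to absolutely continuous measures supported in an annular shell $\{r \geq \varepsilon\}$: by Lemma \ref{ohta} antipodal pairs in $\M \times \M$ are confined to the graph of an at-most-everywhere-defined map and are therefore $\nu \otimes \nu$-negligible, so cone geodesics between generic points stay uniformly away from $\mathsf{O}$ and the full-cone inequality restricts to $\mathsf{CD}(0,n+1)$ on the smooth punctured cone. For the substantial ($\Leftarrow$) direction, assume $\mathsf{Ric}_\M \geq n-1$, so the punctured cone already satisfies $\mathsf{CD}(0,n+1)$. Given absolutely continuous $\mu_0, \mu_1 \in \W_2(\mathsf{Con}(\M), \mathsf{d_{Con}}, \nu)$, pick an optimal coupling $\mathsf{q}$ and a Wasserstein geodesic $\Gamma$. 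Theorem \ref{origin} gives $\Gamma(\tfrac{1}{2})(\{\mathsf{O}\}) = 0$, and the cyclical-monotonicity argument behind Lemma \ref{uniq} goes through verbatim with $\tfrac{1}{2}$ replaced by any interior time $t^\ast$, yielding $\Gamma(t^\ast)(\{\mathsf{O}\}) = 0$ for every $t^\ast \in (0,1)$. Combined with the uniqueness of antipodes (Lemma \ref{ohta}), this lets me disintegrate $\Gamma$ along the antipode map into pointwise cone geodesics that lie entirely in $\mathsf{Con}(\M) \setminus \{\mathsf{O}\}$. Since the Wasserstein distances on the full and punctured cones agree on measures with no mass at $\mathsf{O}$, $\Gamma$ is also a Wasserstein geodesic of the smooth punctured cone, where the smooth $\mathsf{CD}(0,n+1)$ inequality applies and is inherited by the full cone.

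The main obstacle is the propagation of Theorem \ref{origin} from the specific time $t = \tfrac{1}{2}$ to every interior time, and its translation from a statement about the marginal $\Gamma(t^\ast)$ into pointwise information about the realising geodesics. The former requires rerunning the cyclical-monotonicity argument with the through-origin ratio $r_0/(r_0+r_1) = t^\ast$ rather than $\tfrac{1}{2}$ --- here an AM--GM inequality for the cross-terms $r_0 s_1 + r_1 s_0$ versus $r_0 r_1 + s_0 s_1$, valid whenever the two ratios coincide, still produces the strict inequality needed to contradict $\mathsf{d}^2$-cyclical monotonicity --- while the latter uses Lemma \ref{ohta} and a disintegration along the antipode map to conclude that the set of geodesic configurations meeting $\mathsf{O}$ is $\mathsf{q}$-null.
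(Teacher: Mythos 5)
Your overall architecture --- reduce to Cheeger--Taylor, use Theorem \ref{origin} to rule out transport through $\mathsf{O}$, then invoke the smooth $\mathsf{CD}$ theory --- matches the paper's. But there is a genuine gap in how you pass from ``the Wasserstein geodesic avoids $\mathsf{O}$'' to ``the smooth $\mathsf{CD}(0,n+1)$ inequality applies.'' You phrase this as ``the punctured cone $(\mathsf{Con}(\M)\setminus\{\mathsf{O}\},\mathsf{d_{Con}},\nu)$ satisfies $\mathsf{CD}(0,n+1)$'' and then claim the geodesic $\Gamma$ is simultaneously a Wasserstein geodesic of that space. The punctured cone, however, is not a complete metric space: Cauchy sequences converging to $\mathsf{O}$ have no limit, geodesics between near-antipodal points may fail to exist, and the curvature--dimension condition as set up in the paper is a statement about \emph{complete} metric measure spaces. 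Knowing only that $\Gamma(t)(\{\mathsf{O}\})=0$ does not give you any uniform lower bound on the radial coordinate, so the dynamic plan may be concentrated on geodesics coming arbitrarily close to the singularity, and you cannot invoke a displacement-convexity inequality on an incomplete manifold out of the box.

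The paper handles exactly this point with a two-parameter truncation that you bypass. It restricts the intermediate plan $\mathsf{\hat q}$ to $\bigl(\mathsf{Con}(\M)\setminus B_\varepsilon(\mathsf{O})\bigr)^3$, producing genuine probability measures $\mu_i^\varepsilon$ supported away from the tip, and --- crucially --- \emph{embeds} the compact annulus $\mathsf{Con}(\M)\setminus B_\varepsilon(\mathsf{O})$ into an auxiliary \emph{complete} Riemannian manifold $\tilde\M_\varepsilon$ with Ricci bounded below, where the Riemannian $\mathsf{CD}$ theory is legitimately available and yields
$\mathsf{S}_{n'}(\mu^\varepsilon_{1/2}|\nu)\leq\tfrac{1}{2}\mathsf{S}_{n'}(\mu^\varepsilon_0|\nu)+\tfrac{1}{2}\mathsf{S}_{n'}(\mu^\varepsilon_1|\nu)$.
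Theorem \ref{origin} is then used not to place the geodesic inside a smooth open set, but to guarantee that $\mathsf{\hat q}_\varepsilon\to\mathsf{\hat q}$, $\mu_i^\varepsilon\to\mu_i$ as $\varepsilon\to 0$, so the inequality survives in the limit. Your proposal omits this extension-and-limit device entirely, and that is the load-bearing step.

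Two smaller remarks. First, extending the origin-avoidance argument to every interior $t^\ast$ (the AM--GM variant you sketch for $r_0/(r_0+r_1)=t^\ast$) is correct but unnecessary here: since $K=0$ the coefficients $\tau^{(t)}_{0,N}$ are linear in $t$, so the condition reduces to ordinary convexity of entropy along the geodesic, and midpoint convexity at $t=\tfrac12$ together with lower semicontinuity already suffices --- which is why the paper works only at $t=\tfrac12$. Second, the ``disintegration along the antipode map'' is not made precise: the optimal coupling $\mathsf{q}$ need not be absolutely continuous with respect to $\nu\otimes\nu$, so $\nu\otimes\nu$-negligibility of antipodal pairs does not by itself make them $\mathsf{q}$-negligible; the paper instead gets the needed negligibility directly from cyclical monotonicity in Lemma \ref{uniq}.
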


\begin{proof}
We consider two measures $\mu_0,\mu_1\in\mathcal{P}_2(\mathsf{Con}(\M),\mathsf{d_{Con}},\nu)$. Then there exists a geodesic $(\mu_t)_{t\in[0,1]}$ in $\W_2(\mathsf{Con}(\M),\mathsf{d_{Con}})$ connecting $\mu_0$ and $\mu_1$. As above, we consider the partition
$$0=t_0<t_{1/2}=\tfrac{1}{2}<t_1=1$$
of $[0,1]$ and a probability measure $\mathsf{\hat q}$ on $\mathsf{Con}(\M)^3$ satisfying the appropriate properties of Lemma \ref{optimaltransport}. For $\varepsilon>0$ we denote by $\mathsf{\hat q_\varepsilon}$ the restriction of $\mathsf{\hat q}$ to $\mathsf{Con}(\M)^3_\varepsilon:=\left(\mathsf{Con}(\M)\setminus B_\varepsilon(\mathsf{O})\right)^3$, meaning that
$$\mathsf{\hat q_\varepsilon}(A)=\frac{1}{\mathsf{\hat q}(\mathsf{Con}(\M)^3_\varepsilon)}\mathsf{\hat q}(A\cap\mathsf{Con}(\M)^3_\varepsilon)$$
for $A\subseteq\mathsf{Con}(\M)^3$. Furthermore, we define $\mu^\varepsilon_i$ as the projection of $\mathsf{\hat q_\varepsilon}$ on the $i$-th factor
$$\mu^\varepsilon_i:=(\mathsf{p_i})_* \mathsf{\hat q_\varepsilon}$$
where
\begin{align*}
\mathsf{p_i}:\mathsf{Con}(\M)^3&\rightarrow\mathsf{Con}(\M)\\
(x_0,x_{1/2},x_1)&\mapsto x_i
\end{align*}
for $i=0,\tfrac{1}{2},1$, and $\mathsf{q_\varepsilon}:=\left(\mathsf{p_{01}}\right)_\ast\mathsf{\hat q_\varepsilon}$
where
\begin{align*}
\mathsf{p_{01}}:\mathsf{Con}(\M)^3&\rightarrow\mathsf{Con}(\M)^2\\
(x_0,x_{1/2},x_1)&\mapsto(x_0,x_1).
\end{align*}
Then for every $\varepsilon>0$, $\mathsf{q_\varepsilon}$ is an optimal coupling of $\mu^\varepsilon_0$ and $\mu^\varepsilon_1$ and $\mu^\varepsilon_{1/2}$ is a midpoint of them. We derive from Theorem \ref{origin} that the following convergence statements hold true,
$$\mathsf{q_\varepsilon}(B)\underset{\varepsilon\to 0}{\rightarrow}\mathsf{q}(B) \mbox{ \ and \ } \mu^\varepsilon_i(C)\underset{\varepsilon\to 0}{\rightarrow}\mu_i(C)$$
for $i=0,\tfrac{1}{2},1$, $B\subseteq\mathsf{Con}(\M)^2$ and $C\subseteq\mathsf{Con}(\M)$, respectively, where $\mathsf{q}:=\left(\mathsf{p_{01}}\right)_\ast\mathsf{\hat q}$.

\smallskip
\
\\
A more than $20$-year old result by Cheeger and Taylor \cite{cha, chb} is the basis of our proof. For simplicity we introduce the notation $\mathsf{C}_0:=\mathsf{Con}(\M)\setminus\{\mathsf{O}\}$.
\begin{lemma}[Cheeger/Taylor]
The punctured Euclidean cone $\mathsf{C}_0$ is an $(n+1)$-dimensional Riemannian manifold. For $(\phi,r)\in\mathsf{C}_0$ with $\phi\in\M$ and $r>0$ the tangent space $\mathsf{T}_{(\phi,r)}\mathsf{C}_0$ can be parametrized as $\mathsf{T}_\phi\M\oplus\{\lambda\tfrac{\partial}{\partial r}:\lambda\in\R\}$.
Moreover, for $(v,\lambda)\in\mathsf{T}_{(\phi,r)}\mathsf{C}_0$ with $v\in\mathsf{T}_\phi\M$ and $\lambda\in\R$ the identity
$$\mathsf{Ric}_{\mathsf{C}_0}(v+\lambda\tfrac{\partial}{\partial r},v+\lambda\tfrac{\partial}{\partial r})=
\mathsf{Ric}_\M(v,v)-(n-1)\parallel v\parallel^2_{\mathsf{T}_\phi\M}$$
holds true. In particular, $\mathsf{Ric}_\M\geq n-1$ if and only if $\mathsf{Ric}_{\mathsf{C}_0}\geq 0$.
\end{lemma}
\
\\
For fixed $\varepsilon>0$ we embed $\mathsf{Con}(\M)\setminus B_\varepsilon(\mathsf{O})$ in a complete Riemannian manifold $\tilde\M_\varepsilon$ whose Ricci curvature is bounded from below:

\begin{center}
\includegraphics[height=5.0cm]{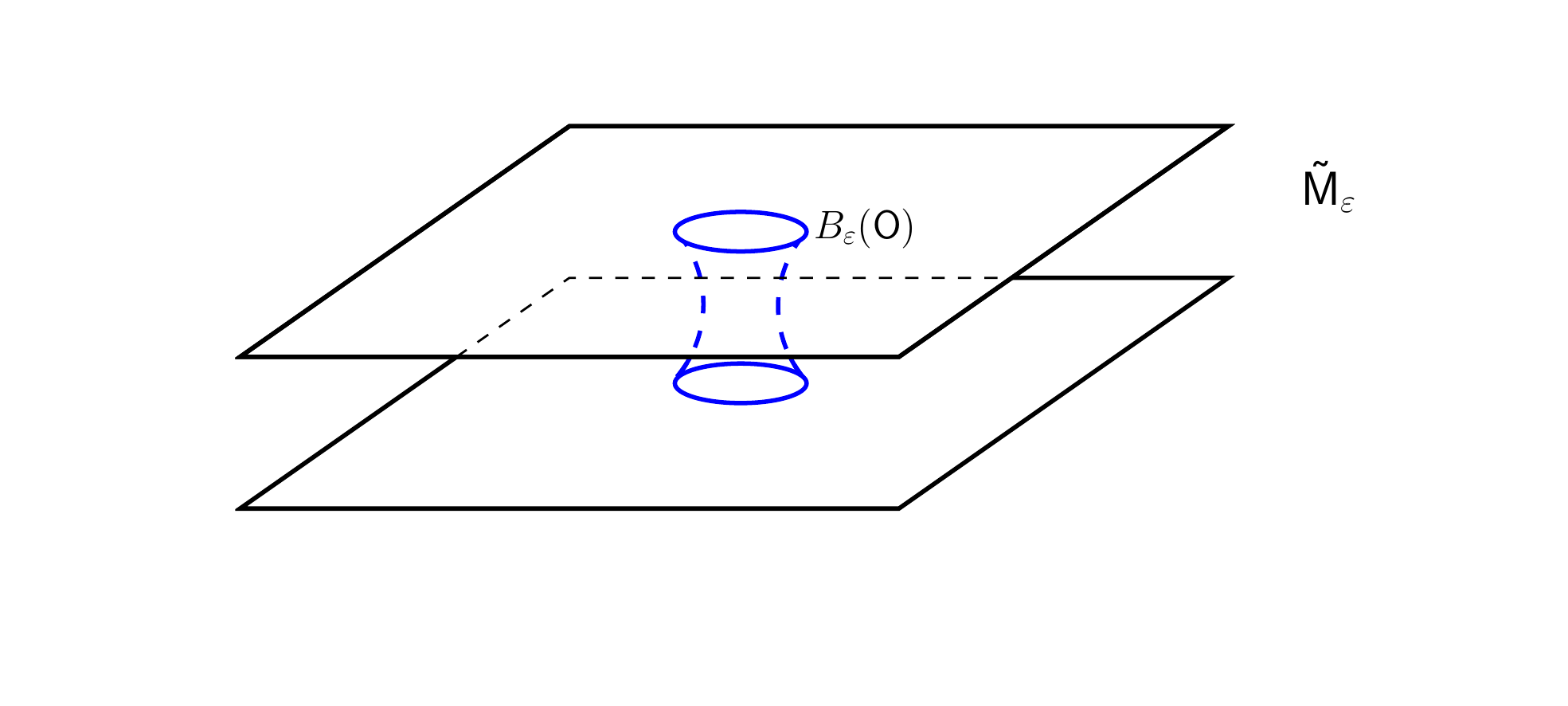}
\end{center}

The inclusion $\mathsf{Con}(\M)\setminus B_\varepsilon(\mathsf{O})\subseteq\tilde\M_\varepsilon$ in a complete Riemannian manifold implies that $\mu^\varepsilon_{1/2}$ is the unique midpoint of $\mu^\varepsilon_0$ and $\mu^\varepsilon_1$ and satisfies
$$\mathsf{S}_{n'}(\mu^\varepsilon_{1/2}|\nu)
\leq\tfrac{1}{2}\mathsf{S}_{n'}(\mu^\varepsilon_0|\nu)+\tfrac{1}{2}\mathsf{S}_{n'}(\mu^\varepsilon_1|\nu)$$
for all $\varepsilon>0$. Passing to the limit $\varepsilon\to 0$ yields according to the convergence statements,
$$\mathsf{S}_{n'}(\mu_{1/2}|\nu)\leq\tfrac{1}{2}\mathsf{S}_{n'}(\mu_0|\nu)+\tfrac{1}{2}\mathsf{S}_{n'}(\mu_1|\nu)$$
for all $n'\geq n+1$.

\end{proof}

\section{Spherical Cones over Metric Measure Spaces} \label{spherical}

\subsection{$N$-Spherical Cones over Metric Measure Spaces}

There are further objects with famous Euclidean ancestors -- among them is the \textit{spherical cone} or \textit{suspension} over a topological space $\M$. We begin with a familiar example: In order to construct the Euclidean sphere $\s^{n+1}$ out of its equator $\s^n$ we add two poles $\mathcal{S}$ and $\mathcal{N}$ and connect them via semicircles, the \textit{meridians}, through every point in $\s^n$.

In the general case of abstract spaces $\M$, we consider the product $\M\times I$ of $\M$ and a segment $I=[0,a]$ and contract each of the fibers $\M\times\{0\}$ and $\M\times\{a\}$ to a point, the \textit{south} and the \textit{north pole}, respectively. The resulting space is denoted by $\Sigma(\M)$ and is called the spherical cone over $\M$.

If $(\M,\mathsf{d})$ is a length space with $\mathsf{diam}(\M)\leq\pi$, we choose $a=\pi$ and define the \textit{spherical cone metric} $\mathsf{d}_\Sigma$ on $\Sigma(\M)$ by the formula
$$\cos\left(\mathsf{d}_\Sigma(p,q)\right)=\cos s\cos t+\sin s\sin t\cos\left(\mathsf{d}(x,y)\right)$$
for $p=(x,s), q=(y,t)\in\Sigma(\M)$.

\begin{definition}[$N$-spherical cone]

The \textit{$N$-spherical cone} $(\Sigma(\M),\mathsf{d}_\Sigma,\nu)$ over a metric measure space $(\mathsf{M},\mathsf{d},\mathsf{m})$ satisfying $\mathsf{diam}(\M)\leq\pi$ is a metric measure space defined as follows:
\begin{itemize}
\item[$\diamond$] $\Sigma(\M):=\mathsf{M}\times[0,\pi]\Big/ \mathsf{M}\times\{0\},\M\times\{\pi\}$
\item[$\diamond$] For $(x,s),(x',t)\in \mathsf{M}\times[0,\pi]$
$$\cos\left(\mathsf{d}_\Sigma((x,s),(x',t))\right):=\cos s\cos t+\sin s\sin t\cos\left(\mathsf{d}(x,x')\right)$$
\item[$\diamond$] $d\nu(x,s):=d\m(x)\otimes(\sin^Ns ds)$.
\end{itemize}
\end{definition}

For a nice introduction and detailed information about Euclidean and spherical cones over metric spaces we refer to \cite{bi}.

\subsection{Optimal Transport on Spherical Cones} \label{optimaltransportonsphericalcone}

This section is structured in the same manner as the corresponding section devoted to optimal transport on Euclidean cones. Again we consider a metric measure space $(\M,\mathsf{d},\mathsf{m})$ with full support $\M=\mathsf{supp}(\m)$ satisfying the curvature-dimension condition $\mathsf{CD}(N-1,N)$ for some $N\geq 1$.

We denote by $(\Sigma(\M),\mathsf{d}_\Sigma,\nu)$ the $N$-spherical cone over $(\M,\mathsf{d},\mathsf{m})$ with poles $\mathcal{S}:=\M\times\{0\}$ and $\mathcal{N}:=\M\times\{\pi\}$. For each pair of probability measures $\mu_0$ and $\mu_1$ in $\W_2(\Sigma(\M),\mathsf{d}_\Sigma,\nu)$ there exists a geodesic $\Gamma:[0,1]\to\W_2(\Sigma(\M),\mathsf{d}_\Sigma)$ connecting them. The critical case in this situation would be if all mass was transported from $\mu_0$ to $\mu_1$ through the poles. But Theorem \ref{poles} excludes this scenario.

We fix $0<s<1$ and consider the partition $0=t_0<t_s=s<t_1=1$ of $[0,1]$. Due to Lemma \ref{optimaltransport}, there exists a probability measure $\mathsf{\tilde q}$ on $\Sigma(\M)^3$ with properties listed in Section \ref{optimaltransportoncone} - with the only difference that in the current situation the time point $\tfrac{1}{2}$ is replaced by $s$.

The following theorem states that the optimal transport from $\mu_0$ to $\mu_1$ does not involve the poles.

\begin{theorem} \label{poles}
It holds that
$$\mathsf{\tilde q}\left(\{(x_0,x_s,x_1)\in\Sigma(\M)^3:x_s=\mathcal{S} \ \mbox{or} \ x_s=\mathcal{N}\}\right)=0.$$
\end{theorem}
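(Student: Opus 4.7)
The plan is to mirror the three-step scheme of Theorem \ref{origin}, handling the two poles separately. The isometry $\iota:(x,r)\mapsto(x,\pi-r)$ of $(\Sigma(\M),\mathsf{d}_\Sigma)$ preserves $\nu$ and swaps $\mathcal{S}\leftrightarrow\mathcal{N}$, so it suffices to establish $\mathsf{\tilde q}(\{x_s=\mathcal{S}\})=0$; the analog for $\mathcal{N}$ will then follow by transporting the argument along $\iota$.

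The first step is the spherical analog of Lemma \ref{anti}: if a geodesic $\gamma:[0,1]\to\Sigma(\M)$ from $x_0=(\phi_0,r_0)$ to $x_1=(\phi_1,r_1)$ satisfies $\gamma(s)=\mathcal{S}$ with $r_0,r_1\in(0,\pi)$, then $r_0=sD$, $r_1=(1-s)D$ with $D:=\mathsf{d}_\Sigma(x_0,x_1)=r_0+r_1$, and $\phi_0,\phi_1$ are antipodes in $\M$. Indeed, the spherical cosine formula yields $\cos D=\cos r_0\cos r_1+\sin r_0\sin r_1\cos(\mathsf{d}(\phi_0,\phi_1))$, whereas $D=r_0+r_1$ gives $\cos D=\cos r_0\cos r_1-\sin r_0\sin r_1$; comparing forces $\cos(\mathsf{d}(\phi_0,\phi_1))=-1$, so the generalized Bonnet--Myers bound $\mathsf{diam}(\M)\le\pi$ delivers $\mathsf{d}(\phi_0,\phi_1)=\pi$.

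The second and main step mirrors Lemma \ref{uniq}. Suppose two distinct antipodal pairs $(\phi_0,\phi_1)\ne(\varphi_0,\varphi_1)$ in $\M$ produce triples $a=((\phi_0,sD),\mathcal{S},(\phi_1,(1-s)D))$ and $b=((\varphi_0,sD'),\mathcal{S},(\varphi_1,(1-s)D'))$ in $\mathsf{supp}(\mathsf{\tilde q})$ for some $D,D'\in(0,\pi]$. Lemma \ref{ohta} (Ohta) ensures that $\phi_0,\varphi_1$ are not antipodal, nor are $\varphi_0,\phi_1$, so the triangle inequalities through $\mathcal{S}$ are strict:
\begin{equation*}
\mathsf{d}_\Sigma(a_0,b_1)<sD+(1-s)D'=:\alpha,\qquad\mathsf{d}_\Sigma(b_0,a_1)<sD'+(1-s)D=:\beta.
\end{equation*}
The identities $\alpha+\beta=D+D'$ and $\alpha-\beta=(2s-1)(D-D')$ together with $(2s-1)^2\le 1$ give
\begin{equation*}
\alpha^2+\beta^2=\tfrac{1}{2}\bigl[(D+D')^2+(2s-1)^2(D-D')^2\bigr]\le D^2+D'^2.
\end{equation*}
Chaining the two bounds yields $\mathsf{d}_\Sigma^2(a_0,b_1)+\mathsf{d}_\Sigma^2(b_0,a_1)<\mathsf{d}_\Sigma^2(a_0,a_1)+\mathsf{d}_\Sigma^2(b_0,b_1)$, contradicting the $\mathsf{d}_\Sigma^2$-cyclical monotonicity of $(\mathsf{p_{01}})_\ast\mathsf{\tilde q}$ granted by Lemma \ref{opttrans}.

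Uniqueness of the antipodal pair then confines $\{(x_0,x_s,x_1)\in\mathsf{supp}(\mathsf{\tilde q}):x_s=\mathcal{S}\}$ to the one-parameter family $\{((\phi_0,sD),\mathcal{S},(\phi_1,(1-s)D)):D\in(0,\pi]\}$ (together with the trivial triple $(\mathcal{S},\mathcal{S},\mathcal{S})$), and absolute continuity of the marginals with respect to $\nu$ forces $\mathsf{\tilde q}$ to vanish on this set, exactly as in the proof of Theorem \ref{origin}. The main obstacle is the step-2 cost comparison at a general partition point $s\in(0,1)$: the analogous Euclidean proof fixes the midpoint parameter $\tfrac12$ and reduces the bound to the elementary $2rs\le r^2+s^2$, whereas here one must exploit the convex-combination identity above to extract the decisive factor $(2s-1)^2\le 1$.
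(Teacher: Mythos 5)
Your proposal is correct and follows the same architecture as the paper's own argument: first the antipodality lemma, then a cyclical-monotonicity contradiction that shows at most one antipodal pair can contribute, then absolute continuity of the marginals kills the remaining one-parameter family. The difference is purely in how step two is executed. The paper computes $\mathsf{d}_\Sigma(a_0,b_1)$ and $\mathsf{d}_\Sigma(b_0,a_1)$ explicitly from the spherical cosine formula, bounds the cosines of $\mathsf{d}(\phi_0,\varphi_1)$ and $\mathsf{d}(\varphi_0,\phi_1)$ below by $-1$, applies the angle-addition identity, and then uses $\arccos(\cos x)=x$ together with $2rt\le r^2+t^2$; you instead invoke the strict triangle inequality through $\mathcal{S}$ (strict precisely because Ohta forbids the intermediate pairs from being antipodal, which by the antipodality lemma is the only way $\mathcal{S}$ could lie on those connecting geodesics), and then close with the identity $\alpha^2+\beta^2=\tfrac12[(D+D')^2+(2s-1)^2(D-D')^2]$. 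Your bound $(2s-1)^2\le 1$ is algebraically equivalent to the paper's AM--GM step (both reduce to the nonnegativity of $2s(1-s)(D-D')^2$), but the triangle-inequality route is cleaner: it sidesteps the implicit requirement $r+\tfrac{1-s}{s}t\le\pi$ in the paper's use of $\arccos(\cos(\cdot))$, and it makes transparent where the strictness comes from. Your observation that the reflection $\iota:(x,r)\mapsto(x,\pi-r)$ is a measure-preserving isometry swapping the poles is a nice formalization of the paper's terse "analogous calculations" remark. One small thing worth flagging, which you handle correctly with the caveat $r_0,r_1\in(0,\pi)$: the cases where $x_0$ or $x_1$ is itself a pole must be ruled out separately (e.g.\ $x_0=\mathcal{N}$, $\gamma(s)=\mathcal{S}$ would force $s\,\mathsf{d}_\Sigma(x_0,x_1)=\pi$, impossible since $\mathsf{diam}\,\Sigma(\M)=\pi$ and $s<1$), and the degenerate triple $(\mathcal{S},\mathcal{S},\mathcal{S})$ must be listed, as both you and the paper do.
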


\begin{proof}
We restrict our attention to the proof of the statement
\begin{equation} \label{statement}
\mathsf{\tilde q}\left(\{(x_0,x_s,x_1)\in\Sigma(\M)^3:x_s=\mathcal{S}\}\right)=0.
\end{equation}
Analogous calculations lead to the complete statement of Theorem \ref{poles}. The proof of (\ref{statement}) consists of two steps:
\begin{lemma} \label{antispher}
Let two points $x_0=(\phi_0,r)$ and $x_1=(\phi_1,t)$ in $\Sigma(\M)$ be given and let $\gamma:[0,1]\to\Sigma(\M)$ be a geodesic connecting them. If $\gamma_s:=\gamma(s)=\mathcal{S}$, then $\phi_0$ and $\phi_1$ are antipodes in $\M$.
\end{lemma}
\begin{proof}[Proof of Lemma \ref{antispher}] Due to the definition of $\mathsf{d}_\Sigma$, it holds that
$$r=\mathsf{d}_\Sigma(x_0,\gamma_s)=s\mathsf{d}_\Sigma(x_0,x_1)$$
as well as
$$t=\mathsf{d}_\Sigma(\gamma_s,x_1)=(1-s)\mathsf{d}_\Sigma(x_0,x_1)$$
and consequently, $t=\tfrac{1-s}{s}r$. Inserting this equality in the expression for $\cos\left(\tfrac{r}{s}\right)$ we obtain
\begin{align*}
\cos\left(\tfrac{r}{s}\right)&=\cos\left(\mathsf{d}_\Sigma(x_0,x_1)\right)\\
&=\cos r\cos\left(\tfrac{1-s}{s}r\right)+\sin r\sin\left(\tfrac{1-s}{s}r\right)\cos\left(\mathsf{d}(\phi_0,\phi_1)\right).
\end{align*}
This leads to
\begin{align*}
\cos(\mathsf{d}(\phi_0,\phi_1))&=\frac{\cos\left(\tfrac{r}{s}\right)-\cos r\cos\left(\tfrac{1-s}{s}r\right)}{\sin r\sin\left(\tfrac{1-s}{s}r\right)}\\
&=\frac{\cos\left(\tfrac{r}{s}\right)-\tfrac{1}{2}\left[\cos\left(\tfrac{2s-1}{s}r\right)+\cos\left(\tfrac{r}{s}\right)\right]}
{\tfrac{1}{2}\left[\cos\left(\tfrac{2s-1}{s}r\right)-\cos\left(\tfrac{r}{s}\right)\right]}\\
&=\frac{\tfrac{1}{2}\left[\cos\left(\tfrac{r}{s}\right)-\cos\left(\tfrac{2s-1}{s}r\right)\right]}
{\tfrac{1}{2}\left[\cos\left(\tfrac{2s-1}{s}r\right)-\cos\left(\tfrac{r}{s}\right)\right]}=-1.
\end{align*}
Finally, we deduce from $\mathsf{d}(\phi_0,\phi_1)\leq\pi$ that $\mathsf{d}(\phi_0,\phi_1)=\pi$.
\end{proof}
\begin{lemma} \label{uniqspher}
Either $\{(x_0,x_s,x_1)\in\mathsf{supp}(\mathsf{\tilde q}):x_s=\mathcal{S}\}$ is the empty set or it coincides with $\{(\mathcal{S},\mathcal{S},\mathcal{S})\}$ or there exists at most one pair $(\phi_0,\phi_1)$ of antipodes in $\M$ with the following property: If $(\mathcal{S},\mathcal{S},\mathcal{S})\not=a=(a_0,a_s,a_1)\in\mathsf{supp}(\mathsf{\tilde q})\subseteq\Sigma(\M)^3$ satisfies $a_s=\mathcal{S}$ then $a_0=(\phi_0,r)$ and $a_1=(\phi_1,\tfrac{1-s}{s}r)$ for some $r\in(0,\pi)$.
\end{lemma}
\begin{proof}[Proof of Lemma \ref{uniqspher}]
We assume that there are two different pairs $(\phi_0,\phi_1)$ and $(\varphi_0,\varphi_1)$ of antipodes in $\M$ such that there exist $a=(a_0,a_s,a_1),b=(b_0,b_s,b_1)\in\mathsf{supp}(\mathsf{\tilde q})$ fulfilling $a_s=\mathcal{S}=b_s$ as well as $a_0=(\phi_0,r)$, $a_1=(\phi_1,\tfrac{1-s}{s}r)$ and $b_0=(\varphi_0,t)$, $b_1=(\varphi_1,\tfrac{1-s}{s}t)$ for $i=0,1$ and some $r,t\in(0,\pi)$. Lemma \ref{antispher} and Lemma \ref{ohta} imply
\begin{align*}
\mathsf{d}^2_\Sigma&(a_0,b_1)+\mathsf{d}^2_\Sigma(b_0,a_1)\\
&=\arccos^2\left[\cos r\cos\left(\tfrac{1-s}{s}t\right)+\sin r\sin\left(\tfrac{1-s}{s}t\right)\cos\underset{<\pi}{\underbrace{\left(\mathsf{d}(\phi_0,\varphi_1)\right)}}\right] +\\
&\hspace{3cm}+ \arccos^2\left[\cos\left(\tfrac{1-s}{s}r\right)\cos t+\sin\left(\tfrac{1-s}{s}r\right)\sin t\cos\underset{<\pi}{\underbrace{\left(\mathsf{d}(\varphi_0,\phi_1)\right)}}\right]\\
&<\arccos^2\left[\cos r\cos\left(\tfrac{1-s}{s}t\right)-\sin r\sin\left(\tfrac{1-s}{s}t\right)\right]\\
&\hspace{3cm}+\arccos^2\left[\cos\left(\tfrac{1-s}{s}r\right)\cos t-\sin\left(\tfrac{1-s}{s}r\right)\sin t\right]\\
&=\arccos^2\left[\cos\left(r+\tfrac{1-s}{s}t\right)\right]+\arccos^2\left[\cos\left(\tfrac{1-s}{s}r+t\right)\right]\\
&=\left[r+\tfrac{1-s}{s}t\right]^2+\left[\tfrac{1-s}{s}r+t\right]^2\\
&=\left[r^2+\left(\tfrac{1-s}{s}\right)^2r^2\right]+\left[t^2+\left(\tfrac{1-s}{s}\right)^2t^2\right]+4\tfrac{1-s}{s}rt\\
&\leq\left[r^2+\left(\tfrac{1-s}{s}\right)^2r^2+2\tfrac{1-s}{s}r^2\right]+\left[t^2+\left(\tfrac{1-s}{s}\right)^2t^2+2\tfrac{1-s}{s}t^2\right]\\
&=\left[\tfrac{r}{s}\right]^2+\left[\tfrac{t}{s}\right]^2=\mathsf{d}^2_\Sigma(a_0,a_1)+\mathsf{d}^2_\Sigma(b_0,b_1).
\end{align*}
This contradicts the fact that the support of $\mathsf{q}:=\left(\mathsf{p_{01}}\right)_\ast\mathsf{\tilde q}$ being an optimal coupling of $\mu_0$ and $\mu_1$ is $\mathsf{d}^2_\Sigma$-cyclically monotone where $\mathsf{p_{01}}:\Sigma(\M)^3\rightarrow\Sigma(\M)^2,(x_0,x_s,x_1)\mapsto(x_0,x_1)$.
\end{proof}
\
\\
At the end of the second step, Theorem \ref{poles} is proved.
\end{proof}

\subsection{Application to Riemannian Manifolds. II} \label{reference}

\begin{theorem} \label{sphconriemman}
The $n$-spherical cone $(\Sigma(\M),\mathsf{d}_\Sigma,\nu)$ over a compact and complete $n$-dimensional Riemannian manifold $(\mathsf{M},\mathsf{d}, \V)$ satisfies $\mathsf{CD}(n,n+1)$ if and only if $\M$ fulfills $\mathsf{Ric}_\M\geq n-1$.
\end{theorem}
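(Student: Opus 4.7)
The plan is to mirror the strategy used for the Euclidean cone in Section~2.3, with the warping function $r \mapsto r$ replaced by $r \mapsto \sin r$ and with the single pole $\mathsf{O}$ replaced by the two poles $\mathcal{S},\mathcal{N}$. The key structural fact I would state first is a warped--product analogue of the Cheeger/Taylor lemma: the punctured spherical cone $\Sigma_0:=\Sigma(\M)\setminus\{\mathcal{S},\mathcal{N}\}$ is an $(n+1)$-dimensional Riemannian manifold, isometric to $\M\times(0,\pi)$ equipped with the warped metric $dr^2+\sin^2(r)\,g_\M$, and $\nu$ is exactly its Riemannian volume. Standard warped-product formulas for $f(r)=\sin r$ give, for $(\phi,r)\in\Sigma_0$ and $(v,\lambda)\in\mathsf{T}_{(\phi,r)}\Sigma_0$,
$$\mathsf{Ric}_{\Sigma_0}(v+\lambda\tfrac{\partial}{\partial r},v+\lambda\tfrac{\partial}{\partial r})
=\mathsf{Ric}_\M(v,v)-(n-1)\|v\|^2_{\mathsf{T}_\phi\M}+n\bigl(\|v\|^2_{\mathsf{T}_\phi\M}\sin^{-2}(r)\cdot\sin^2(r)+\lambda^2\bigr),$$
from which a direct check yields $\mathsf{Ric}_\M\ge n-1\iff \mathsf{Ric}_{\Sigma_0}\ge n$ on $\Sigma_0$.

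The ``only if'' direction is then the easier one. If $(\Sigma(\M),\mathsf{d}_\Sigma,\nu)$ satisfies $\mathsf{CD}(n,n+1)$, I would localize to arbitrary pre-compact open subsets of $\Sigma_0$. On such a subset the space is a smooth Riemannian manifold, and the curvature-dimension condition is known to be equivalent to the classical bound $\mathsf{Ric}_{\Sigma_0}\ge n$ and $\dim\le n+1$ there. Combined with the warped-product identity above, this forces $\mathsf{Ric}_\M\ge n-1$.

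For the ``if'' direction, I would proceed exactly as in the Euclidean case. Given $\mu_0,\mu_1\in\mathcal{P}_2(\Sigma(\M),\mathsf{d}_\Sigma,\nu)$, pick a geodesic $\Gamma$ in $\mathcal{P}_2(\Sigma(\M),\mathsf{d}_\Sigma)$ connecting them, fix $s\in(0,1)$, and invoke Lemma~\ref{optimaltransport} to produce a $3$-point coupling $\mathsf{\tilde q}$ on $\Sigma(\M)^3$ with marginals $\Gamma(0),\Gamma(s),\Gamma(1)$ and intermediate-point property. For each $\varepsilon>0$ set $\Sigma(\M)^3_\varepsilon:=\bigl(\Sigma(\M)\setminus(B_\varepsilon(\mathcal{S})\cup B_\varepsilon(\mathcal{N}))\bigr)^3$, normalize the restriction $\mathsf{\tilde q}_\varepsilon:=\mathsf{\tilde q}|_{\Sigma(\M)^3_\varepsilon}/\mathsf{\tilde q}(\Sigma(\M)^3_\varepsilon)$, and let $\mu_i^\varepsilon$ be its marginals. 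Embed the ``collar'' $\Sigma(\M)\setminus(B_\varepsilon(\mathcal{S})\cup B_\varepsilon(\mathcal{N}))$ isometrically into a \emph{complete} smooth Riemannian manifold $\tilde\M_\varepsilon$ of dimension $n+1$ with $\mathsf{Ric}_{\tilde\M_\varepsilon}\ge n$ (for instance by smoothly capping off the two truncated poles with pieces of round hemispheres of the correct radius, whose Ricci curvature is exactly $n$). Applying the classical CD condition on $\tilde\M_\varepsilon$ with the distortion coefficients $\tau^{(s)}_{n,n+1}$ gives the displayed inequality (\ref{CD}) for $\mu_0^\varepsilon,\mu_s^\varepsilon,\mu_1^\varepsilon$. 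Theorem~\ref{poles} guarantees $\mathsf{\tilde q}(\Sigma(\M)^3_\varepsilon)\to 1$ as $\varepsilon\to 0$, so $\mu_i^\varepsilon\to\mu_i$ weakly, and I would pass to the limit using weak lower semicontinuity of $\mathsf{S}_{N'}(\cdot\,|\,\nu)$ together with continuity of the $\tau^{(s)}_{n,n+1}$ integrand to obtain (\ref{CD}) for $\mu_0,\mu_1$. By varying $s$ and observing that $(\Sigma(\M),\mathsf{d}_\Sigma)$ is non-branching (inherited from $\Sigma_0$ plus the pole analysis of Lemmas~\ref{antispher}--\ref{uniqspher}), it suffices to verify (\ref{CD}) for $N'=n+1$.

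The main obstacle I expect is the construction of $\tilde\M_\varepsilon$ in the ``if'' direction: unlike the Euclidean case, where the cap only needs to preserve $\mathsf{Ric}\ge 0$ and any smooth cone-cap will do, here one must graft on a smooth cap near each pole that (i) is $C^1$ with the warped metric $dr^2+\sin^2(r)\,g_\M$ at the boundary $r=\varepsilon$ or $r=\pi-\varepsilon$, and (ii) maintains the sharp lower Ricci bound $n$. A natural way is to glue small spherical caps (which realize $\mathsf{Ric}\equiv n$) and smooth the seam while controlling the Ricci tensor via a standard mollification of the warping function; the geometric core of the argument is that $\sin(r)$ already satisfies $f''+f=0$, matching the curvature-$n$ model space, so the cap can indeed be chosen with $\mathsf{Ric}\ge n$. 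Making this construction rigorous, and ensuring the embedding is isometric so that optimal couplings on $\tilde\M_\varepsilon$ restrict correctly to $\mathsf{\tilde q}_\varepsilon$, is the technical heart of the proof.
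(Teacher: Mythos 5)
Your proposal follows the paper's own proof almost exactly: the same three-point coupling at a fixed intermediate time $s$, the same restriction away from the poles via Theorem~\ref{poles}, the same embedding of the collar $\Sigma(\M)\setminus(B_\varepsilon(\mathcal{S})\cup B_\varepsilon(\mathcal{N}))$ into a complete Riemannian manifold $\tilde\M_\varepsilon$ with $\mathsf{Ric}$ bounded below, and the same passage to the limit $\varepsilon\to 0$ using the convergence of $\mathsf{\tilde q}_\varepsilon$ and $\mu_i^\varepsilon$. Two minor remarks: your displayed warped-product Ricci formula is garbled (the factor $\sin^{-2}(r)\cdot\sin^2(r)$ collapses to $1$ and drops the $\cos^2 r$ term; the identity the paper quotes from Petean is $\mathsf{Ric}_{\Sigma_0}(v+\lambda\tfrac{\partial}{\partial r},v+\lambda\tfrac{\partial}{\partial r})=\mathsf{Ric}_\M(v,v)+(1-n\cos^2 r)\|v\|^2_{\mathsf{T}_\phi\M}+n\lambda^2$), though the equivalence $\mathsf{Ric}_\M\ge n-1\iff\mathsf{Ric}_{\Sigma_0}\ge n$ you extract is the correct one; and your concern about grafting caps onto $\tilde\M_\varepsilon$ while preserving the sharp bound $\mathsf{Ric}\ge n$ is well placed --- this is precisely the step the paper asserts without further detail.
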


\begin{proof}
We consider two measures $\mu_0,\mu_1\in\mathcal{P}_2(\Sigma(\M),\mathsf{d}_\Sigma,\nu)$. Then there exists a geodesic $(\mu_t)_{t\in[0,1]}$ in $\W_2(\Sigma(\M),\mathsf{d}_\Sigma)$ connecting $\mu_0$ and $\mu_1$. As before, we consider for a fixed but arbitrary $0<s<1$ the partition
$$0=t_0<t_s=s<t_1=1$$
of $[0,1]$ and a probability measure $\mathsf{\tilde q}$ on $\Sigma(\M)^3$ satisfying the appropriate properties of Lemma \ref{optimaltransport}. For $\varepsilon>0$ we denote by $\mathsf{\tilde q_\varepsilon}$ the restriction of $\mathsf{\tilde q}$ to $\Sigma(\M)^3_\varepsilon:=\left[\Sigma(\M)\setminus(B_\varepsilon(\mathcal{S})\cup B_\varepsilon(\mathcal{N}))\right]^3$, meaning that
$$\mathsf{\tilde q_\varepsilon}(A)=\frac{1}{\mathsf{\tilde q}(\Sigma(\M)^3_\varepsilon)}\mathsf{\tilde q}(A\cap\Sigma(\M)^3_\varepsilon)$$
for $A\subseteq\Sigma(\M)^3$. Furthermore, we define $\mu^\varepsilon_i$ as the projection of $\mathsf{\tilde q_\varepsilon}$ on the $i$-th factor
$$\mu^\varepsilon_i:=(\mathsf{p_i})_* \mathsf{\tilde q_\varepsilon}$$
for $i=0,s,1$ and $\mathsf{q_\varepsilon}$ as the projection of $\mathsf{\tilde q_\varepsilon}$ on the first and third factor $$\mathsf{q_\varepsilon}:=\left(\mathsf{p_{01}}\right)_\ast\mathsf{\tilde q_\varepsilon}.$$
Then for every $\varepsilon>0$, $\mathsf{q_\varepsilon}$ is an optimal coupling of $\mu^\varepsilon_0$ and $\mu^\varepsilon_1$ and $\mu^\varepsilon_s$ is an $s$-intermediate point of them. We derive from Theorem \ref{poles} that the following convergence statements hold true,
$$\mathsf{q_\varepsilon}(B)\underset{\varepsilon\to 0}{\rightarrow}\mathsf{q}(B) \mbox{ \ and \ } \mu^\varepsilon_i(C)\underset{\varepsilon\to 0}{\rightarrow}\mu_i(C)$$
for $i=0,s,1$, $B\subseteq\Sigma(\M)^2$ and $C\subseteq\Sigma(\M)$, respectively, where $\mathsf{q}:=\left(\mathsf{p_{01}}\right)_\ast\mathsf{\tilde q}$.

\smallskip
\
\\
The core of our proof is shown by Petean \cite{jp}. We use the notation $\Sigma_0:=\Sigma(\M)\setminus\{\mathcal{S},\mathcal{N}\}$.

\begin{lemma}[Petean]
The punctured spherical cone $\Sigma_0$ is an incomplete $(n+1)$-dimensional Riemannian manifold whose tangent space $\mathsf{T}_{(\phi,r)}\Sigma_0$ at $(\phi,r)\in\Sigma_0$ with $\phi\in\M$ and $0<r<\pi$ can be parametrized as
$$\mathsf{T}_{(\phi,r)}\Sigma_0=\mathsf{T}_\phi\M\oplus\{\lambda\tfrac{\partial}{\partial r}:\lambda\in\R\}$$
and whose metric tensor is given by
$$\parallel v+\lambda\tfrac{\partial}{\partial r}\parallel^2_{\mathsf{T}_{(\phi,r)}\Sigma_0}=\lambda^2+\sin^2r\parallel v\parallel^2_{\mathsf{T}_\phi\M}$$
for $(v,\lambda)\in\mathsf{T}_{(\phi,r)}\Sigma_0$ with $v\in\mathsf{T}_\phi\M$ and $\lambda\in\R$. Furthermore, we have the equality
$$\mathsf{Ric}_{\Sigma_0}(v+\lambda\tfrac{\partial}{\partial r},v+\lambda\tfrac{\partial}{\partial r})=\mathsf{Ric}_\M(v,v)+(1-n\cos^2r)\parallel v\parallel^2_{\mathsf{T}_\phi\M}+n\lambda^2$$
for $(v,\lambda)\in\mathsf{T}_{(\phi,r)}\Sigma_0$. In particular, $\mathsf{Ric}_{\Sigma_0}\geq n$ if and only if $\mathsf{Ric}_\M\geq n-1$.
\end{lemma}
\
\\
For fixed $\varepsilon>0$ we embed $\Sigma(\M)\setminus(B_\varepsilon(\mathcal{S})\cup B_\varepsilon(\mathcal{N}))$ in a complete Riemannian manifold $\tilde\M_\varepsilon$ whose Ricci curvature is bounded from below. This inclusion $\Sigma(\M)\setminus(B_\varepsilon(\mathcal{S})\cup B_\varepsilon(\mathcal{N}))\subseteq\tilde\M_\varepsilon$ implies that $\mu^\varepsilon_s$ is the unique $s$-intermediate point of $\mu^\varepsilon_0$ and $\mu^\varepsilon_1$ and satisfies
\begin{equation*}
\mathsf{S}_{n'}(\mu^\varepsilon_s|\nu)\leq\tau^{(1-s)}_{n-1,n'}(\theta)\mathsf{S}_{n'}(\mu^\varepsilon_0|\nu)+
\tau^{(s)}_{n-1,n'}(\theta)\mathsf{S}_{n'}(\mu^\varepsilon_1|\nu),
\end{equation*}
where
$$\theta:=\underset{x_1\in\mathsf{supp}(\mu_1)}{\underset{x_0\in\mathsf{supp}(\mu_0),}{\inf}}\mathsf{d}_\Sigma(x_0,x_1)$$
for all $\varepsilon>0$. Passing to the limit $\varepsilon\to 0$ yields according to the convergence statements,
\begin{equation*}
\mathsf{S}_{n'}(\mu_s|\nu)\leq\tau^{(1-s)}_{n-1,n'}(\theta)\mathsf{S}_{n'}(\mu_0|\nu)+
\tau^{(s)}_{n-1,n'}(\theta)\mathsf{S}_{n'}(\mu_1|\nu)
\end{equation*}
for all $n'\geq n+1$.
\end{proof}

Because of Theorem \ref{sphconriemman} we can apply the Lichnerowicz theorem \cite{lva} in order to obtain a lower bound on the spectral gap of the Laplacian on the spherical cone:

\begin{corollary}[Lichnerowicz estimate, Poincar\'e inequality]
Let $(\Sigma(\M),\mathsf{d}_\Sigma,\nu)$ be the $n$-spherical cone of a compact and complete $n$-dimensional Riemannian manifold $(\mathsf{M},\mathsf{d}, \V)$ with $\mathsf{Ric}\geq n-1$. Then for every $f\in\mathsf{Lip}(\Sigma(\M))$ fulfilling $\int_{\Sigma(\M)} f \ d\nu=0$ the following inequality holds true,
\begin{equation*}
\int_{\Sigma(\M)} f^2d\nu\leq \tfrac{1}{n+1}\int_{\Sigma(\M)}|\nabla f|^2d\nu.
\end{equation*}
\end{corollary}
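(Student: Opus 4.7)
The plan is to invoke the generalized Lichnerowicz spectral gap estimate for metric measure spaces satisfying $\mathsf{CD}(K,N)$ with $K>0$ and $N\in(1,\infty)$, which is precisely the theorem of Lott--Villani cited in the corollary. That result asserts that under $\mathsf{CD}(K,N)$ one has the Poincar\'e inequality
$$\int g^2 \, d\m \leq \frac{N-1}{KN}\int|\nabla g|^2 \, d\m$$
for every Lipschitz $g$ with $\int g\,d\m=0$; equivalently, the first non-trivial eigenvalue of the Laplacian is bounded below by $\lambda_1\geq KN/(N-1)$.

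First, I would apply Theorem \ref{sphconriemman} to conclude that under the hypothesis $\mathsf{Ric}_\M\geq n-1$, the $n$-spherical cone $(\Sigma(\M),\mathsf{d}_\Sigma,\nu)$ satisfies $\mathsf{CD}(n,n+1)$. Here the relevant parameters are thus $K=n>0$ and $N=n+1>1$, so the positivity and integrability hypotheses of the Lichnerowicz theorem are met.

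Next, substituting $K=n$ and $N=n+1$ into the general estimate $\lambda_1\geq KN/(N-1)$ gives
$$\lambda_1 \;\geq\; \frac{n(n+1)}{(n+1)-1} \;=\; n+1,$$
so that for any $f\in\mathsf{Lip}(\Sigma(\M))$ with $\int_{\Sigma(\M)} f\, d\nu=0$ one obtains
$$\int_{\Sigma(\M)} f^2 \, d\nu \;\leq\; \frac{1}{n+1}\int_{\Sigma(\M)} |\nabla f|^2 \, d\nu,$$
which is exactly the claimed inequality.

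There is no real obstacle here: the content of the corollary is already packaged in Theorem \ref{sphconriemman} together with the abstract Lichnerowicz theorem of Lott--Villani. The only subtle point worth a brief comment is that $|\nabla f|$ on the right-hand side must be interpreted as the metric (local) slope, which coincides with the Riemannian norm of the gradient on the smooth locus $\Sigma_0=\Sigma(\M)\setminus\{\mathcal{S},\mathcal{N}\}$, and the two poles form a $\nu$-null set so the distinction is immaterial for the integral.
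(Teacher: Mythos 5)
Your proposal is correct and matches the paper's reasoning exactly: the paper likewise obtains the corollary by combining Theorem \ref{sphconriemman} (which yields $\mathsf{CD}(n,n+1)$) with the Lott--Villani Lichnerowicz theorem, giving $\lambda_1\geq KN/(N-1)=n+1$. Your added remark about interpreting $|\nabla f|$ as the local slope, agreeing with the Riemannian gradient norm off the $\nu$-null set of poles, is a sensible clarification the paper leaves implicit.
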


The Lichnerowicz estimate implies that the Laplacian $\Delta$ on the spherical cone $(\Sigma(\M),\mathsf{d}_\Sigma,\nu)$ defined by the identity
$$\int_{\Sigma(\M)} f\cdot\Delta g \ d\nu=-\int_{\Sigma(\M)}\nabla f\cdot\nabla g \ d\nu$$ admits a spectral gap $\lambda_1$ of size at least $n+1$,
$$\lambda_1\geq n+1.$$

\bigskip
\
\\
\textbf{Acknowledgement.} The second author would like to thank Jeff Cheeger for stimulating discussions during a visit at Courant Institute in 2004, in particular, for posing the problem treated in Theorem \ref{Cheeger}, as well as for valuable comments on an early draft of this paper.

\end{document}